\theoremstyle{plain}
\newtheorem{acknowledgement}{Acknowledgement}
\newtheorem{corollary}{Corollary}
\newtheorem{proposition}{Proposition}
\newtheorem{remark}{Remark}
\newtheorem{theorem}{Theorem}
\numberwithin{equation}{section}
\begin{document}
\title[Local extrema of spectrograms and Gaussian Analytic Functions]{Local
maxima of white noise spectrograms and Gaussian Entire Functions}
\author{Luis Daniel~Abreu}
\address{NuHAG, Faculty of Mathematics, University of Vienna,
Oskar-Morgenstern-Platz 1, A-1090, Vienna, Austria\\
 }

\address{(Formerly) Acoustics Research Institute, Austrian Academy of Sciences, Wohllebengasse 12-14, A-1040, Vienna, Austria. \\
}

\email{abreuluisdaniel@gmail.com}
\thanks{L.D.~Abreu was supported by FWF Project 31225-N32}
\thanks{}
\subjclass{}
\date{\today }
\keywords{Local maxima, White noise, Spectrograms, Gaussian Entire
Functions, Landau levels, polyanalytic functions}

\begin{abstract}
We confirm Flandrin's prediction for the expected average of local maxima of
spectrograms of complex white noise with Gaussian windows (Gaussian
spectrograms or, equivalently, modulus of weighted Gaussian Entire
Functions), a consequence of the conjectured double honeycomb mean model for
the network of zeros and local maxima, where the area of local maxima
centered hexagons is three times larger than the area of zero centered
hexagons. More precisely, we show that Gaussian spectrograms, normalized
such that their expected density of zeros is $1$, have an expected density
of $5/3$ critical points, among those $1/3$ are local maxima, and $4/3$
saddle points, and compute the distributions of ordinate values (heights)
for spectrogram local extrema. This is done by first writing the
spectrograms in terms of Gaussian Entire Functions (GEFs). The extrema are
considered under the translation invariant derivative of the Fock space
(which in this case coincides with the Chern connection from complex
differential geometry). We also observe that the critical points of a GEF
are precisely the zeros of a Gaussian random function in the first higher
Landau level. We discuss natural extensions of these Gaussian random
functions: Gaussian Weyl-Heisenberg functions and Gaussian bi-entire
functions. The paper also reviews recent results on the applications of
white noise spectrograms, connections between several developments, and is
partially intended as a pedestrian introduction to the topic.
\end{abstract}

\maketitle

\section{Introduction}

The present paper is concerned with the \emph{critical points }(divided in 
\emph{local maxima},\emph{\ local minima} and \emph{saddle points}) of
spectrogram transformed white noise. More precisely, we are interested in
statistical averages of critical points and local extrema of white noise
spectrograms with Gaussian windows. As first observed in \cite{BFC}, such
white noise spectrograms can be written in terms of a Gaussian Entire
Function (GEF), an observation which had the virtue of connecting the
traditionally applications-oriented field of time-frequency analysis, to the
traditionally more fundamental topic of Gaussian Analytic Functions.

We will both deal with the Gaussian Entire Function%
\begin{equation*}
F(z)=\sum_{k=0}^{\infty }a_{k}\frac{z^{k}}{\sqrt{k!}}
\end{equation*}%
and its translation invariant derivative%
\begin{equation*}
F_{1}(z)=F_{1}(z,\overline{z})=\left( \partial _{z}-\overline{z}\right)
F(z)=\sum_{k=0}^{\infty }a_{k}\frac{(k-\left\vert z\right\vert ^{2})z^{k-1}}{%
\sqrt{k!}}\text{,}
\end{equation*}%
where $a_{k}$ are i. i. d. Gaussian random variables. The zeros of $F_{1}(z)$
describe the critical points of $H(z)=\left\vert F(z)\right\vert
^{2}e^{-\left\vert z\right\vert ^{2}}$, which can be divided in saddle
points and local maxima/minima, while, due to the maximum modulus principle,
the equation $F^{\prime }(z)=0$ can only yield saddle points. As we will see
below, $\left\vert e^{-\left\vert z\right\vert ^{2}/2}F(z)\right\vert ^{2}$
and $\left\vert e^{-\left\vert z\right\vert ^{2}/2}F_{1}(z)\right\vert ^{2}$
can be identified with white noise spectrograms windowed by the Gaussian and
the first Hermite function, respectively. This leads to a Gaussian functions
companion to the determinantal point processes in higher Landau levels \cite%
{HaiAron,APRT,abgrro17,HendHaimi,SHIRAI,MakotoShirai,Deviations}, also
defined using time-frequency transforms. Observe that, while $F(z)$ is
analytic, $F_{1}(z)$ is just polyanalytic, since $\partial _{\overline{z}%
}^{2}F_{1}(z,\overline{z})=0$ (see \cite{AF} for an introduction to
polyanalytic functions from the time-frequency analysis perspective).

We now make an \emph{intermezzo}\ in this introduction to point out two
curious properties.

\begin{itemize}
\item \emph{Silence} points (zeros)\emph{\ repel} each other strongly ($2$%
-point intensity vanishes at zero \cite[Section 3]{GAFbook}).

\item \emph{Loud} points (critical points) display weaker repulsion (\emph{%
positive }$2$-point intensity at zero \cite{Baber}).
\end{itemize}

Back to our work, we will first compute \emph{the expected number of
critical, saddle and local maxima} coordinates, defined under constraints on
the critical points equation,%
\begin{equation*}
F_{1}(z_{c})=0\text{,}
\end{equation*}%
and then the corresponding average distribution of\emph{\ ordinate values}
(the expected average values of $F(z)$ at the critical, saddle and local
maxima coordinates), where the ordinate value $x_{c}$ associated to the
point $z_{c}$ is 
\begin{equation*}
x_{c}=\left\vert e^{-\left\vert z\right\vert ^{2}/2}F(z_{c})\right\vert
=Spec_{g}\mathcal{W}(z_{c})^{1/2}\in \mathbb{R}^{+}\text{.}
\end{equation*}%
The description of the local extrema and corresponding ordinates of
spectrogram transformed white noise is likely to be useful in signal
analysis applications involving local maxima of spectrograms as in, for
instance, \cite{Wang}, but also in reassignment \cite{FlandrinReassignment}
and syncrosqueezing \cite{Dau} techniques. This is particularly evident in
differential reassignment \cite{ReassignmentMaxima}, where local maxima are
attractors of the corresponding vector field.

Our results also complement the information used in the recent algorithms
aimed at recovering a signal embedded in white noise from its spectrogram
zeros \cite{Silence0,Silence,BFC,BH,Pole,PNAS,BardenetSampta,Escudero}
(these `zero-based' methods have also been recently extended to wavelets,
see \cite{KolSapmTA,FilteringWavelet,BH}, to the sphere \cite{Sphere} and to
the Stockwell transform \cite{Stockwell}). Such algorithms have found
notable applications, for instance, in methods for anonymize motion sensor
data and enhance privacy in the Internet of Things (IoT) \cite{IoT}.

The analysis of local extrema of white noise spectrograms relies on the
eigenvalues of the Hessians of the Gaussian vectors, from which one can
derive the Kac-Rice type formulas as in \cite{DZS}. The resulting expected
number of local maxima is $1/3$ times the expected number of zeros. One can
provide heuristics for the $1/3$ proportion using a random honeycomb model,
suggested by Flandrin to describe zeros and local maxima of spectrograms of
white noise (see Figure 1 on page 6). By extrapolating such heuristics to
the setting of holomorphic sections of a Hermitian holomorphic line bundle
over a complex manifold \cite{DZS}, the model also suggests a hand-waving
explanation for the occurrence of the factor $1/3$ in the first term of the
statistics of supersymmetric vacua (modelled as fixed Morse index critical
points of random holomorphic sections). Moving to a different physical
context, a correspondence between the critical points of Gaussian entire
functions and the zeros of a Gaussian bi-entire function will be described.
The Gaussian bi-entire function lives on the first higher Landau level
eigenspace, a space of paramount importance in condensed matter physics,
used to model electron dynamics on a energy level above the first layer of
charged-like particles. The Landau levels are the classical explanation for
the step-like changes in conductivity under the action of a constant
magnetic field associated with the integer Quantum-Hall effect \cite{Nobel}.

The results in this paper are novel to a certain extent, since their
statement (both in the GEF and time-frequency language) has not appeared
before in the literature and given a direct proof. But one should not go as
far as calling them `genuinely new', since general results have been
obtained for compact K\"{a}hler manifolds and the particular calculations
done for $SU(2)$ random polynomials in \cite{DZS} and \cite{FZ1}. It may
come as a surprise that the more simple (and also the most well-studied) case of
the GEF has not been studied before, since the cases treated in \cite{DZS}
and \cite{FZ1} are much more complicated. At least formally, our main
results could have been indirectly derived (one may say `guessed', since
actually some definitions are a bit different for the non-compact case, most
notably in the section on critical values) by properly considering limit
cases of the results for $SU(2)$ polynomials in \cite{DZS} and \cite{FZ1}.
Even if this argument could be made rigorous, it would be a long journey in
comparison to the direct and relatively elementary proofs we offer in this
paper. 

An outline of the paper follows. Since we will alternate between the
spectrogram and GEF formulations, to avoid confusing readers not familiar
with both topics, we try to always present both perspectives, sometimes at
the cost of a little redundancy. In the next section we present a short
review of the required fundamental concepts about random Gaussian functions
and time-frequency analysis. In the third section we present the main
results, together with some remarks concerning the motivations, implications
and connections to the work of other authors. The proofs of the main results
are gathered in section 4. In section 5, we will consider Gaussian \emph{%
non-analytic} functions motivated by the results of the paper. First we
observe that the nonzero critical points of $H(z)$ are precisely the zeros
of a bi-analytic Gaussian function with correlation kernel given by the
reproducing kernel of the second Landau level. This suggests a more general
class of Gaussian random functions with correlation kernels given by the
kernel of the Weyl-Heisenberg ensemble \cite{abgrro17,APRT}.\ We will also
consider Gaussian bi-analytic functions, with correlation kernel given by
the sum of the reproducing kernels of the first two Landau levels. We close
the presentation with a conclusion section, discussing the connections
between the topics and results involved in the paper, and a few
considerations about their symbiotic nature.

\section{Background}

\subsection{Gaussian entire functions and spectrograms of white noise}

Given a window function $g\in L^{2}({\mathbb{R}})$, the short-time Fourier
transform of $f\in L^{2}({\mathbb{R}})$ is 
\begin{equation}
V_{g}f(x,\xi )=\int_{{\mathbb{R}}}f(t)\overline{g(t-x)}e^{-2\pi i\xi
t}dt,\qquad (x,\xi )\in {\mathbb{R}^{2}}.  \label{eq_stft}
\end{equation}%
when $\lVert g\rVert _{2}=1$, the map $V_{g}$ is an isometry between $L^{2}({%
\mathbb{R}})$ and a closed subspace of $L^{2}({\mathbb{R}^{2}})$: 
\begin{equation*}
\lVert V_{g}f\rVert _{{L^{2}({\mathbb{R}^{2}})}}=\lVert f\rVert _{{L^{2}({%
\mathbb{R}})}},\qquad f\in {L^{2}({\mathbb{R}})}.
\end{equation*}%
If we choose the Gaussian function $h_{0}(t)=2^{\frac{1}{4}}e^{-\pi t^{2}}$, 
$t\in {\mathbb{R}}$, as a window in \eqref{eq_stft}, then a simple
calculation shows that, identifying $z=(x,\xi )\in \mathbb{R}^{2}$ with $%
z:=x+i\xi \in {\mathbb{C}}$, 
\begin{equation}
e^{-i\pi x\xi +\frac{\pi }{2}\left\vert z\right\vert ^{2}}V_{h_{0}}f(x,-\xi
)=2^{1/4}\int_{\mathbb{R}}f(t)e^{2\pi tz-\pi t^{2}-\frac{\pi }{2}z^{2}}dt=%
\mathcal{B}f(z),  \label{Bargmann}
\end{equation}%
where $\mathcal{B}f(z)$ is the \emph{Bargmann transform} of $f$ \ \cite%
{Charly}. The Bargmann transform $\mathcal{B}$ is a unitary isomorphism from 
$L^{2}({\mathbb{R}})$ onto the Bargmann-Fock space $\mathcal{F}(\mathbb{C})$
consisting of all entire functions satisfying 
\begin{equation}
\left\Vert F\right\Vert _{\mathcal{F}(\mathbb{C})}^{2}=\int_{\mathbb{C}%
}\left\vert F(z)\right\vert ^{2}e^{-\pi \left\vert z\right\vert
^{2}}dz<\infty .  \label{Focknorm}
\end{equation}%
We will consider Hermite functions normalized as follows: 
\begin{equation*}
h_{r}(t)=\frac{2^{1/4}}{\sqrt{r!}}\left( \frac{-1}{2\sqrt{\pi }}\right)
^{r}e^{\pi t^{2}}\frac{d^{r}}{dt^{r}}\left( e^{-2\pi t^{2}}\right) ,\qquad
r\geq 0,
\end{equation*}%
Let $g(t):=2^{1/4}e^{-\pi t^{2}}$, $t\in {\mathbb{R}}$, be the
one-dimensional, $L^{2}$-normalized Gaussian. The short-time Fourier
transform of the Hermite functions with respect to $g$ is 
\begin{equation}
V_{g}h_{k}(\frac{\bar{z}}{\sqrt{\pi }})=e^{ix\xi }\frac{z^{k}}{\sqrt{k!}}%
e^{-\left\vert z\right\vert ^{2}/2},\qquad k\geq 0\text{.}  \label{hermiteT}
\end{equation}%
One can formally define complex white noise $\mathcal{W}$ by the series
expansion 
\begin{equation}
\mathcal{W(}t\mathcal{)}=\sum_{k=0}^{\infty }a_{k}h_{k}(t)  \label{wn}
\end{equation}%
where $a_{k}$ are i. i. d. Gaussian random variables (the probability of
such sequences being square summable is zero, thus, with probability one,
the series is not convergent in $L^{2}$, but with a little extra effort the
argument can be made rigorous, by defining the sum (\ref{wn}) in a larger
Banach space, see \cite[Section 3]{BH} for a detailed approach). Then, by
linearity of the Short-time Fourier transform $V_{g}$, we have formally from
(\ref{wn}) and (\ref{hermiteT})%
\begin{equation}
V_{g}\mathcal{W}(\frac{\bar{z}}{\sqrt{\pi }})=\sum_{k=0}^{\infty
}a_{k}V_{g}h_{k}(\frac{\bar{z}}{\sqrt{\pi }})=e^{ix\xi }e^{-\left\vert
z\right\vert ^{2}/2}\sum_{k=0}^{\infty }a_{k}\frac{z^{k}}{\sqrt{k!}}
\label{calc}
\end{equation}

\subsection{Gaussian entire functions and Chern critical points}

Gaussian entire functions are defined, for $z\in \mathbb{C}$, as%
\begin{equation}
F(z)=\sum_{k=0}^{\infty }a_{k}\frac{z^{k}}{\sqrt{k!}}\text{,}  \label{GEF}
\end{equation}%
where $a_{k}$ are i. i. d. Gaussian random variables, and can be related to
the Short-time Fourier transform of white noise (see the formal manipulation
(\ref{calc}) and \cite{BFC,BH} for details) as follows: 
\begin{equation*}
V_{g}\mathcal{W}(\frac{\bar{z}}{\sqrt{\pi }})=e^{ix\xi }e^{-\left\vert
z\right\vert ^{2}/2}\sum_{k=0}^{\infty }a_{k}\frac{z^{k}}{\sqrt{k!}}
\end{equation*}%
Since $F(z)$\ is an entire function, all local minima are zeros (see
Proposition 1 in Section 4 for a proof). Moreover, the maximum principle
rules out the possibility of finding local maxima for $\left\vert
F(z)\right\vert $. However, this is possible to do if we look at the extrema
of white noise spectrograms. More precisely, at the critical, saddle and
local maxima points and corresponding ordinate values, of the following
random function:%
\begin{equation}
Spec_{g}\mathcal{W}(z)\mathcal{=}\left\vert V_{g}\mathcal{W}(\frac{\bar{z}}{%
\sqrt{\pi }})\right\vert ^{2}=\left\vert e^{-\left\vert z\right\vert
^{2}/2}\sum_{k=0}^{\infty }a_{k}\frac{z^{k}}{\sqrt{k!}}\right\vert
^{2}=\left\vert e^{-\left\vert z\right\vert ^{2}/2}F(z)\right\vert ^{2}=H(z)
\label{H}
\end{equation}%
where $V_{g}$\ stands for the Short-Time Fourier Transform with a Gaussian
window, or, equivalently, of%
\begin{equation*}
\log \left( Spec_{g}\mathcal{W}(z)\right) ^{\frac{1}{2}}=\log \left\vert
F(z)e^{-\frac{\left\vert z\right\vert ^{2}}{2}}\right\vert =\log \left\vert
F(z)\right\vert -\frac{\left\vert z\right\vert ^{2}}{2}=U(z)\text{.}
\end{equation*}%
The non-zero critical points of $H(z)$ and $U(z)$ are given by the solutions
of the equation%
\begin{equation}
\nabla _{z}^{\prime }F(z)=0\text{,}  \label{Chern}
\end{equation}%
where $\nabla _{z}^{\prime }$ is the translation-invariant derivative (the
symmetric part of the \emph{Chern connection}, see section 4 for more
details):%
\begin{equation*}
\nabla _{z}^{\prime }=\partial _{z}-\overline{z}
\end{equation*}%
leading us to consider the following Gaussian non-Entire Function, 
\begin{equation}
F_{1}(z)=\nabla _{z}^{\prime }F(z)=\sum_{k=0}^{\infty }a_{k}\frac{\nabla
_{z}^{\prime }z^{k}}{\sqrt{k!}}=\sum_{k=0}^{\infty }a_{k}\frac{(\partial
_{z}-\overline{z})z^{k}}{\sqrt{k!}}=\sum_{k=0}^{\infty }a_{k}\frac{%
(k-\left\vert z\right\vert ^{2})z^{k-1}}{\sqrt{k!}}  \label{GnEF}
\end{equation}%
and the associated correlation kernel%
\begin{equation*}
K(z,w)=\mathbb{E}(\nabla _{z}^{\prime }F(z)\overline{\nabla _{z}^{\prime
}F(w)})=L_{1}^{0}(\left\vert z-w\right\vert ^{2})e^{z\overline{w}}\text{,}
\end{equation*}%
where $L_{j}^{\alpha }$ denotes the Laguerre polynomial 
\begin{equation*}
L_{j}^{\alpha }(x)=\sum\limits_{i=0}^{j}(-1)^{i}\binom{j+\alpha }{j-i}\frac{%
x^{i}}{i!},\qquad x\in {\mathbb{R}},\qquad j\geq 0,j+\alpha \geq 0\text{.}
\end{equation*}%
$K(z,w)$ turns out to be the reproducing kernel of the eigenspace linked to
the second eigenvalue of the Landau Laplacian with a magnetic field (the
second Landau Level - see the last section of the paper for more details).
The solutions of (\ref{Chern}) are the \emph{Chern critical points} for
holomorphic functions in $\mathbb{C}$, following the terminology of \cite%
{DZS}, where general Riemann surfaces have been considered.

In what follows, we will use the concept of average number of a point
process $\mathcal{X}$ on $\mathbb{C}$, $\mathcal{N}^{\mathcal{X}}(z)$, well
known in point processes theory as the $1$-point intensity (the most popular
notation is $\rho _{1}$) because it does not measure correlations with other
points of the process. It simply gives the probability that $z\in \mathcal{X}
$. For a precise definition (for details, we suggest consulting the
introductory material in \cite{GAFbook}), one considers the expectation of
the random distribution 
\begin{equation*}
\mathbf{\delta }=\sum_{w\in \mathcal{X}}\delta _{w}\text{,}
\end{equation*}%
which can be defined weakly on test functions as%
\begin{equation*}
\mathbb{E}\left( \left[ \mathbf{\delta },\varphi \right] \right) =\mathcal{%
\int_{\mathbb{C}}\varphi }(z)\mathcal{N}^{\mathcal{X}}(z)d\nu (z)\text{,}
\end{equation*}%
where $d\nu (z)=\frac{1}{\pi }dz$, where $dz\ $stands for Lebesgue measure.
For instance, given $\Omega \subset \mathbb{C}$, denote by $\mathcal{X}%
\left( \Omega \right) $ the number of points to fall in $\Omega $ on a
realization of $\mathcal{X}$. To compute the expected number of points to
fall in $\Omega \subset \mathbb{C}$, one chooses $\mathcal{\varphi }(z)=%
\mathbf{1}_{\Omega }(z)$, leading to 
\begin{equation}
\mathbb{E}\left( \mathcal{X}\left( \Omega \right) \right) =\int_{\Omega }%
\mathcal{N}^{\mathcal{X}}(z)d\nu (z)\text{.}  \label{Expectation}
\end{equation}%
For the point process defined by the zeros of $F(z)$, it is well known (from
a trivial computation using Edelman-Kostlan formula \cite{EdelmanKostlan},
see \cite{GAFbook}) that $\mathcal{N}^{zer}=1$, which means that one zero is
expected per unit area of the plane. It may be instructive to observe that
Formula (\ref{Expectation}) allows to compute the density of the expected
number of points. For instance, for the density of expected number of zeros,
we choose $\Omega $\ as a disc of center $z$ and radius $R$ and obtain from (%
\ref{Expectation}) that $\mathbb{E}\left( \mathcal{X}\left( D(z,R)\right)
\right) =\int_{D(z,R)}1d\nu (z)=R^{2}$. Then, 
\begin{equation*}
\lim\limits_{R\rightarrow \infty }\sup_{z\in \mathbb{C}}\frac{\#\text{points
expected in }D(z,R)}{\left\vert D(z,R)\right\vert }=\lim\limits_{R%
\rightarrow \infty }\frac{\mathbb{E}\left( \mathcal{X}\left( D(0,R)\right)
\right) }{R^{2}}=1\text{.}
\end{equation*}

\section{Results and comments}

\subsection{Critical, saddle and local maxima points}

In the next result we compute the corresponding averages for the critical,
saddle and local maxima points of $H(z)$. We denote such averages by $%
\mathcal{N}^{crit}$, $\mathcal{N}^{sadd}$ and $\mathcal{N}^{\max }$,
respectively.

\begin{theorem}
All local minima of $H(z)$ are zeros of $F(z)$ and none of them is a zero of 
$\ \nabla _{z}^{\prime }F(z)$. The remaining critical points of $H(z)$\emph{%
\ }are zeros of\emph{\ \ }$\nabla _{z}^{\prime }F(z)$. Their average number
is $\mathcal{N}^{crit}=5/3$, which can be divided in an expected average
number of $\mathcal{N}^{\max }=1/3$ local maxima of $H(z)$ and $\mathcal{N}%
^{sadd}=4/3$ saddle points.
\end{theorem}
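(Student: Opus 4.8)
The plan is to compute the three densities $\mathcal{N}^{crit}$, $\mathcal{N}^{\max}$, $\mathcal{N}^{sadd}$ via Kac–Rice formulas applied to the random field $H(z)=\lvert F(z)\rvert^2 e^{-\lvert z\rvert^2}$, or equivalently to $U(z)=\log\lvert F(z)\rvert - \lvert z\rvert^2/2$. Let me think about how I'd actually set this up.

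The plan is to compute the three densities via a Kac--Rice argument applied to the random field $U(z)=\log\lvert F(z)\rvert-\tfrac{1}{2}\lvert z\rvert^{2}$, whose critical points coincide with those of $H(z)$. First I would separate the local minima from the other critical points. A direct Wirtinger computation gives $\partial_{z}U=\tfrac12\bigl(F'/F-\overline{z}\bigr)=F_{1}/(2F)$, so off the zero set of $F$ the critical points of $U$ are exactly the zeros of $F_{1}$; this proves the second assertion. For the first, since $\log\lvert F\rvert$ is harmonic away from zeros and $\Delta\lvert z\rvert^{2}=4$, one gets $\Delta U=-2<0$ on $\mathbb{C}\setminus F^{-1}(0)$. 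A strictly superharmonic function has no interior local minimum, so every local minimum of $H$ sits at a zero of $F$, where $H=0$ is the global minimum. Because the zeros of a GEF are almost surely simple, at such a point $F_{1}=F'-\overline{z}F=F'\neq0$, so no local minimum is a zero of $F_{1}$.

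Next I would classify the zeros of $F_{1}$ by the Hessian of $U$. The identity $\partial_{\overline z}F_{1}=-F$ (immediate from $F_{1}=\partial_{z}F-\overline z F$) yields the constant $U_{z\overline z}=\tfrac14\Delta U=-\tfrac12$, and at a point where $F_{1}=0$ one finds $U_{zz}=\partial_{z}F_{1}/(2F)$. Using $\det\mathrm{Hess}(U)=4\bigl(U_{z\overline z}^{2}-\lvert U_{zz}\rvert^{2}\bigr)$ this becomes $\det\mathrm{Hess}(U)=1-\lvert\partial_{z}F_{1}\rvert^{2}/\lvert F\rvert^{2}$. Since $\Delta U=-2<0$ forces the trace to be negative, a positive determinant means a negative-definite Hessian, i.e.\ a local maximum, while a negative determinant gives a saddle. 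Hence local maxima occur exactly when $\lvert\partial_{z}F_{1}\rvert<\lvert F\rvert$ and saddles when $\lvert\partial_{z}F_{1}\rvert>\lvert F\rvert$.

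I would then run Kac--Rice for the zeros of the (polyanalytic, non-analytic) Gaussian field $F_{1}\colon\mathbb{C}\to\mathbb{C}$. The relevant Jacobian is $\mathrm{Jac}(F_{1})=\lvert\partial_{z}F_{1}\rvert^{2}-\lvert\partial_{\overline z}F_{1}\rvert^{2}=\lvert\partial_{z}F_{1}\rvert^{2}-\lvert F\rvert^{2}$, whose sign matches the classification above ($\det\mathrm{Hess}(U)=-\mathrm{Jac}(F_{1})/\lvert F\rvert^{2}$). The field $(F,F_{1},\partial_{z}F_{1})$ is stationary under the magnetic (Weyl--Heisenberg) translations, so all intensities are constant and may be evaluated at $z=0$, where the series collapse to $F(0)=a_{0}$, $F_{1}(0)=a_{1}$ and $\partial_{z}F_{1}(0)=\sqrt{2}\,a_{2}$, three \emph{independent} standard complex Gaussians. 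Conditioning on $F_{1}(0)=0$ removes $a_{1}$ and leaves $a_{0},a_{2}$ untouched, and $p_{F_{1}(0)}(0)=1/\pi$. Converting the Lebesgue density into the $d\nu=\tfrac1\pi dz$ normalization absorbs the $1/\pi$, so that $\mathcal{N}^{crit}=\mathbb{E}\bigl\lvert\,\lvert\partial_{z}F_{1}\rvert^{2}-\lvert F\rvert^{2}\bigr\rvert$, with the local-maximum and saddle parts given by the two signed halves.

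Finally, setting $X=\lvert a_{0}\rvert^{2}$ (exponential of mean $1$) and $Y=2\lvert a_{2}\rvert^{2}$ (exponential of mean $2$), independent, the problem reduces to the elementary moments $\mathbb{E}(X-Y)_{+}=\tfrac13$, $\mathbb{E}(Y-X)_{+}=\tfrac43$ and $\mathbb{E}\lvert Y-X\rvert=\tfrac53$, which deliver $\mathcal{N}^{\max}=1/3$, $\mathcal{N}^{sadd}=4/3$ and $\mathcal{N}^{crit}=5/3$; the check $\mathcal{N}^{zer}=1$ for the analogous computation on $F$ fixes the normalization. I expect the main obstacle to be the rigorous Kac--Rice setup for the \emph{non-analytic} field $F_{1}$: verifying the non-degeneracy hypotheses, justifying the reduction to $z=0$ through the translation invariance of the weighted field, and confirming that conditioning on $F_{1}=0$ leaves $F$ and $\partial_{z}F_{1}$ jointly independent and unchanged. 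Once that decoupling is secured, the remaining computation is routine.
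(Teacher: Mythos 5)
Your proposal is correct and follows essentially the same route as the paper: superharmonicity of $U$ off the zero set to exclude nonzero local minima, simplicity of zeros to show $F_{1}\neq 0$ there, a Hessian-sign classification of maxima versus saddles that matches the sign of the Kac--Rice Jacobian of $F_{1}$, and the same exponential-moment integrals yielding $1/3$, $4/3$, $5/3$. Your reduction to $z=0$, where the triple collapses to the independent Gaussians $(a_{0},a_{1},\sqrt{2}\,a_{2})$, is just a slicker way of obtaining the paper's constant normalized covariance matrix $\mathrm{diag}(1,2,1)$ for $(\nabla_{z}^{\prime}F,\nabla_{z}^{\prime}\nabla_{z}^{\prime}F,\nabla_{z}^{\prime\prime}\nabla_{z}^{\prime}F)$, using that $\partial_{z}F_{1}$ and $\nabla_{z}^{\prime}\nabla_{z}^{\prime}F$ agree at critical points.
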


From (\ref{Expectation}), the expected number of local maxima in a set $%
\Omega \subset \mathbb{C}$ with Lesbegue measure $\left\vert \Omega
\right\vert <\infty $ will then be given by%
\begin{equation*}
\mathbb{E}\left( \mathcal{X}_{\max }\left( \Omega \right) \right) =\frac{1}{3%
}\left\vert \Omega \right\vert \text{.}
\end{equation*}%
Taking into account the identification between spectrograms of complex white
noise with Gaussian windows and Gaussian Entire Functions provided by (\ref%
{H}), we have%
\begin{equation*}
Spec_{g}\mathcal{W}(z)=H(z)
\end{equation*}%
and%
\begin{equation*}
\log \left( Spec_{g}\mathcal{W}(z)^{\frac{1}{2}}\right) =U(z)\text{,}
\end{equation*}%
where $V_{g}$\ stands for the Short-Time Fourier Transform with a Gaussian
window $g(t):=2^{1/4}e^{-\pi t^{2}}$, $t\in {\mathbb{R}}$. Under this
identification, Theorem 1 can be reformulated as:

\begin{corollary}
All local minima of $Spec_{g}\mathcal{W}(z)$ are zeros of $V_{g}\mathcal{W}$%
. The remaining critical points of $Spec_{g}\mathcal{W}(z)$\emph{\ }are
zeros of\emph{\ \ }$\nabla Spec_{g}\mathcal{W}(z)$. Their expected average
number is $5/3$, which can be divided in an expected average number of $1/3$
local maxima and $4/3$ saddle points of $Spec_{g}\mathcal{W}(z)$.
\end{corollary}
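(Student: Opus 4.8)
Since the identification $Spec_{g}\mathcal{W}(z)=H(z)$ in \eqref{H} is an equality of random fields, the Corollary is a verbatim translation of Theorem~1 into time-frequency language; so the plan is to prove Theorem~1 and then read off the Corollary by substituting $H=Spec_{g}\mathcal{W}$, $\nabla_{z}^{\prime}F=\nabla Spec_{g}\mathcal{W}$ and $U=\log(Spec_{g}\mathcal{W})^{1/2}$. The first step is to pass to $U=\tfrac12\log H=\log|F|-\tfrac12|z|^{2}$ and compute, with Wirtinger derivatives, $\partial_{z}\log H=F'/F-\bar z=F_{1}/F$ wherever $F\neq0$; since $\partial_{z}U=0$ characterizes the critical points of the real function $U$ (hence of $H$), the critical points of $H$ lying off the zero set of $F$ are exactly the zeros of $F_{1}=\nabla_{z}^{\prime}F$. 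Because $H\ge0$ vanishes precisely at the zeros of $F$, every such zero is a global, hence local, minimum; to see there are no other minima I would use $\Delta U=\Delta\log|F|-\tfrac12\Delta|z|^{2}=-2<0$ away from the zeros of $F$, so that $U$ is strictly superharmonic there and can have no interior local minimum. Finally, at an (almost surely simple) zero $z_{0}$ of $F$ one has $F_{1}(z_{0})=F'(z_{0})\neq0$, which gives the separation of the two point sets asserted in the statement.

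The second step is the Morse classification of the zeros of $F_{1}$. Writing the real Hessian of $U$ in complex form yields $\det\mathrm{Hess}\,U=4\bigl(U_{z\bar z}^{2}-|U_{zz}|^{2}\bigr)$, where the mixed term is constant, $U_{z\bar z}=\tfrac14\Delta U=-\tfrac12$, so that the trace $\Delta U=-2$ is always negative (confirming once more that no critical point off the zeros of $F$ is a minimum). At a zero of $F_{1}$ one computes $U_{zz}=\partial_{z}F_{1}/(2F)$, whence $\det\mathrm{Hess}\,U=1-|\partial_{z}F_{1}|^{2}/|F|^{2}$; the critical point is a local maximum when $|\partial_{z}F_{1}|<|F|$ and a saddle when $|\partial_{z}F_{1}|>|F|$.

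The third step is a Kac--Rice computation for the zero set of the Gaussian field $F_{1}$, along the lines of \cite{DZS,EdelmanKostlan}. I would first record that $H$ is stationary in law: the twisted translates $\widetilde{F}_{w}(z)=e^{-\bar w z-|w|^{2}/2}F(z+w)$ have the same distribution as $F$ and satisfy $|\widetilde{F}_{w}(z)|^{2}e^{-|z|^{2}}=H(z+w)$, so the critical-point intensities are constant and may be evaluated at $z=0$. There $F(0)=a_{0}$, $F_{1}(0)=a_{1}$, $\partial_{z}F_{1}(0)=\sqrt2\,a_{2}$ and $\partial_{\bar z}F_{1}(0)=-a_{0}$ are built from the independent standard complex Gaussians $a_{0},a_{1},a_{2}$, so the real Jacobian of $F_{1}$ equals $|\partial_{z}F_{1}|^{2}-|\partial_{\bar z}F_{1}|^{2}=2|a_{2}|^{2}-|a_{0}|^{2}$. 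The Kac--Rice density with respect to Lebesgue measure is then $\tfrac1\pi\,\mathbb{E}\bigl[\,\bigl|\,2|a_{2}|^{2}-|a_{0}|^{2}\,\bigr|\,\bigr]$ --- the conditioning on $F_{1}(0)=a_{1}=0$ being vacuous because $a_{1}$ is independent of $a_{0},a_{2}$, and $p_{F_{1}(0)}(0)=1/\pi$ --- and splitting the absolute value by the sign of the Jacobian separates local maxima ($|a_{0}|^{2}>2|a_{2}|^{2}$) from saddles ($|a_{0}|^{2}<2|a_{2}|^{2}$), in agreement with the classification of the previous step.

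It remains to evaluate these expectations. With $|a_{0}|^{2}$ and $|a_{2}|^{2}$ independent standard exponential variables, the elementary integrals give $\mathbb{E}\,(|a_{0}|^{2}-2|a_{2}|^{2})^{+}=\tfrac13$, $\mathbb{E}\,(2|a_{2}|^{2}-|a_{0}|^{2})^{+}=\tfrac43$ and $\mathbb{E}\,\bigl|2|a_{2}|^{2}-|a_{0}|^{2}\bigr|=\tfrac53$. The density factor $1/\pi$ cancels against the normalization $d\nu=\tfrac1\pi dz$, so the intensities with respect to $\nu$ are exactly $\mathcal{N}^{\max}=1/3$, $\mathcal{N}^{sadd}=4/3$ and $\mathcal{N}^{crit}=5/3$; as a consistency check, the same scheme applied to the holomorphic field $F$ returns $\mathbb{E}|F'(0)|^{2}\cdot\tfrac1\pi\cdot\pi=\mathcal{N}^{zer}=1$. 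The main obstacle I anticipate is not the arithmetic but the rigorous justification of the Kac--Rice formula for the non-analytic (polyanalytic) field $F_{1}$: verifying the almost-sure non-degeneracy of $F_{1}$ at its zeros, the integrability allowing differentiation of the expected counting measure, and the stationarity reduction to $z=0$, together with the almost-sure simplicity of the zeros of $F$ used in the first step.
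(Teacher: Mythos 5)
Your proposal is correct and follows essentially the same route as the paper: the Corollary is obtained by translating Theorem~1 via the identification $Spec_{g}\mathcal{W}=H$, and your proof of Theorem~1 mirrors the paper's (superharmonicity of $U$ off the zero set to exclude minima, simplicity of the zeros of $F$ to separate the two point sets, the sign of the complex Hessian determinant at zeros of $F_{1}$ to distinguish maxima from saddles, and a Kac--Rice computation with the vector $(F_{1},\partial_{z}F_{1},\partial_{\bar z}F_{1})$ yielding the same exponential integrals). The only cosmetic difference is that you invoke stationarity to evaluate everything at $z=0$ in terms of $a_{0},a_{1},a_{2}$, whereas the paper computes the covariance matrix of $(\nabla_{z}^{\prime}F,\nabla_{z}^{\prime}\nabla_{z}^{\prime}F,\nabla_{z}^{\prime\prime}\nabla_{z}^{\prime}F)$ at a general point and normalizes it to $\mathrm{diag}(1,2,1)$; the resulting Gaussian integrals are identical.
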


This confirms a heuristic prediction of Patrick Flandrin, based on a
conjectured double mean honeycomb model for the distribution of zeros and
local maxima of Gaussian spectrograms, illustrated by the scheme of Figure 1
below. According to such model, zeros define a mean honeycomb structure made
of hexagons, which themselves define a larger honeycomb structure attached
to local maxima, resulting in a distribution where the area of the local
maxima centered hexagons is three times larger than the area of zero
centered hexagons (besides Figure 1, see also \cite[pg 148; Figure 15.9]%
{Flbook} \cite[Fig. 1]{Fl1}, \cite[Formula (22)]{Fl2}). Such a distribution
would have a density of zeros three times bigger than the density of local
maxima, as the above result demonstrates to be true. We note in passing that
Sodin and Tsirelson \cite{AsymptNorm}, have shown that the random set with a
triangular structure (slightly different from the perturbed hexagonal
lattice)%
\begin{equation*}
\Lambda =\left\{ \sqrt{3\pi }(k+li)+ce^{2\pi im/3}\eta _{k,l}:k,l\in \mathbb{%
Z};m=0,1,2\right\} \text{,}
\end{equation*}%
where $\eta _{k,l}$ are i.i.d Gaussian, achieves asymptotic similarity with
the zeros of the GEF. Gathering the discussion of this paragraph, as a
further step towards Flandrin's conjecture, \emph{we conjecture that }$%
\Lambda $\emph{\ achieves asymptotic similarity with the local maxima of the
GEF.}

Hexagonal lattices are ubiquitous in Euclidean point configuration problems.
In time-frequency analysis this is discussed in Feichtinger's survey \cite%
{Landscapes}. An important conjecture by Strohmer and Beaver \cite{SB}
expects the condition number of (deterministic) Gabor frames with Gaussian
windows to be optimized by a hexagonal lattice. Significant progress towards
a proof has been made by Faulhuber and collaborators \cite{FS}, and a
preprint with a full solution of the problem has recently been posted in 
\cite{Hexagonal}.

\begin{figure}[tbp]
\centering
\includegraphics[scale=0.8]{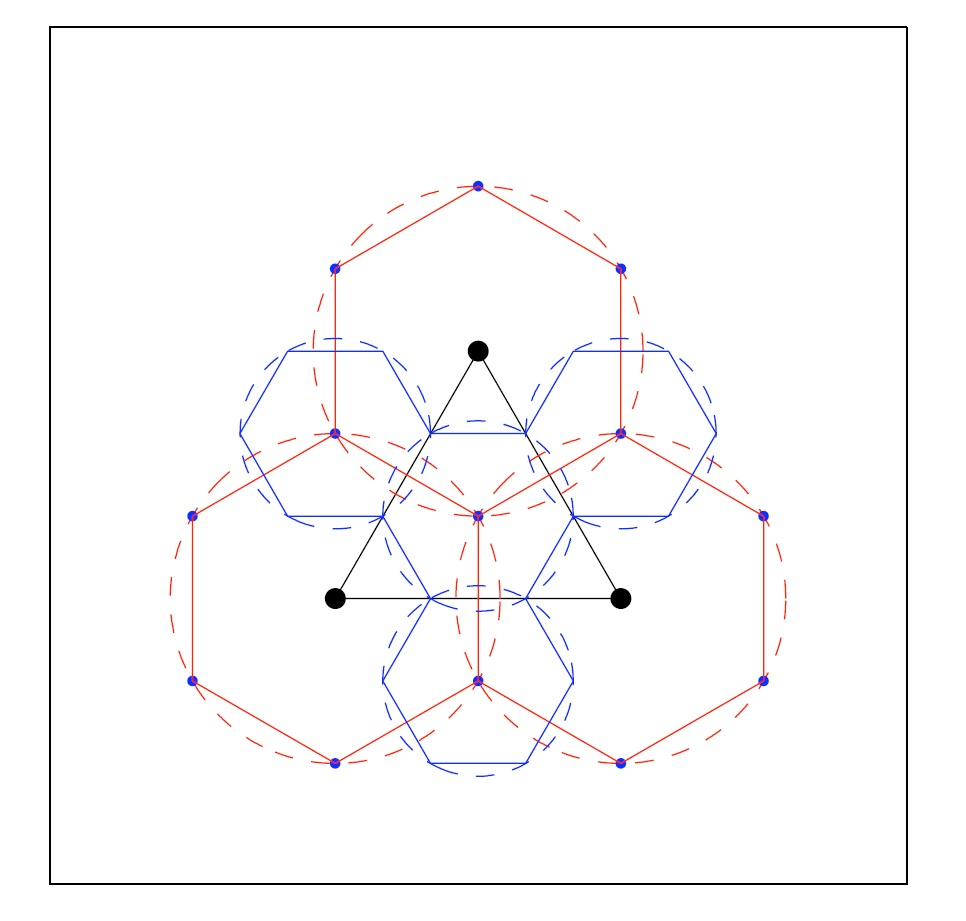}
\caption{Flandrin's honeycomb: the area of the hexagons centered at local
maxima (big dots) is three times the area of the hexagons centered at zeros
(small dots)}
\end{figure}

\begin{remark}
In \cite{BH} the expected number of holomorphic critical points of $F(z)$
(given by the equation $F^{\prime }(z)=0$) has been computed using the
Edelman-Kostlan formula, since $F^{\prime }(z)$ is also an entire function.
It coincides with the large $N$ limit of the number of saddle points of the
chaotic analytic polynomial \cite{Saddles}. While sharing as common aspect
an increase in the number of saddles when compared to the number of zeros,
the result is quite different from the above. The average, explicitly given
as%
\begin{equation*}
N_{hol}^{crit}(z)=\left( 1+(1+\left\vert z\right\vert ^{2})^{-2}\right) 
\text{,}
\end{equation*}%
depends on $\left\vert z\right\vert $, since the zero set of $F^{\prime
}(z)=0$\ is rotation invariant (whence the radial average) but not
translation invariant like $\nabla _{z}^{\prime }F(z)$. In this case all
solutions of the equation $F^{\prime }(z)=0$ yield saddle points, due to the
restriction imposed by the maximum modulus principle of an analytic
function, which prevent the existence of local extrema. By considering $%
\nabla _{z}^{\prime }F(z)=0$ instead (which is equivalent to the zeros of
the gradient of the full spectrogram, as we will see in the next section),
we were able to provide a full description of the local extrema and saddle
points of $H(z)$.
\end{remark}

\begin{remark}
Due to the factor $\left\vert z\right\vert ^{2}$\ in (\ref{GnEF}), $\nabla
_{z}^{\prime }F(z)$ is not an entire function. Thus, one cannot take
advantage of the simplifications leading to the Edelman-Kostlan formula and
the direct use of Kac-Rice type formulas is necessary.
\end{remark}

\begin{remark}
The fact that all local minima of $H(z)$ are zeros of $F(z)$ confirms what
has been observed numerically in \cite{BFC}.
\end{remark}

\begin{remark}
In \cite{Feng}, it has been shown that zeros of $F(z)$ and\ $\nabla
_{z}^{\prime }F(z)$ exhibit repulsion at small scales (when their distance
tends to zero). The impossibility of $F(z)$ and\ $\nabla _{z}^{\prime }F(z)$
having common zeros (see Proposition 1 for a proof) is another manifestation
of such repulsion.
\end{remark}

\begin{remark}
A striking result of Ghosh and Nishry \cite{GN} shows that the zero set of $%
F(z)$, conditioned on the rare event of a disc with no zeros with radius $r$%
, converges (scaled) to an explicit limiting Radon measure with a large
\textquotedblleft forbidden region\textquotedblright\ between a singular
part supported on the boundary of the (scaled) hole and the equilibrium
measure far from the hole. Given the structure of zeros and local maxima
suggested by Flandrin's honeycomb network, partially supported by our
results, it would be interesting to know what happens to the critical/local maxima points
under the same rare event conditioning. At the moment we have no hint on
what to expect. Technically, the main obstruction in adapting the methods
used in \cite{GN} seems to be the absence of Jensen's formula for $\nabla
_{z}^{\prime }F(z)$ (a bi-analytic function). The same obstruction shows up
if one attempts to adapt the known proofs for upper bounding the probability
of existence of a disc without local maxima (the so-called hole probability).
\end{remark}

Also related to the above results are two questions of Nazarov, Sodin and
Volberg in \cite[12.3]{BO2} and \cite[2.2.3]{NS}, where it is also suggested
that the techniques used in \cite{DZS} should also work here. Indeed, the
proof of Theorem 1 consists of a simplification (possible in this case) of
the methods used in \cite{DZS}, where the full calculations have been
performed in one dimension for an elliptic model of Gaussian polynomials. In
our case, results from analysis in one complex variable can be applied to
rule out the possibility of local minima, simplifying the discussion on
Morse indexes (see \cite{DZS} and \cite{FZ1}) to the sign of the Hessian
determinant at critical points. We give now a glimpse of the questions
stated in \cite[12.3]{BO2} and \cite[2.2.3]{NS} (see also the comments
starting on page 146 of \cite{Flbook} for a time-frequency perspective
combined with other ideas, such as differential reassignment and Voronoi
tesselation).

The origin of the problem lies on the idea of gravitational allocation.
Letting the Lebesgue measure fall by gravity and descend along gradient
lines to the zeros of $F(z)$, one is lead to an evolution described by a
stochastic gradient flow involving the log-spectrogram $U(z)$\ as a
potential: 
\begin{equation*}
\frac{dZ(t)}{dt}=-\nabla U(Z(t))\text{.}
\end{equation*}%
Denoting by $\Gamma _{z}$ the corresponding gradient curves that pass
through $z$ (assumed to be neither a zero of $F(z)$ nor a critical point of $%
U(z)$), a random partition of the plane is obtained:%
\begin{equation*}
\mathbb{C}=\bigcup_{a:F(a)=0}B(a)\text{,}
\end{equation*}%
where $B(a)$ is the basin of attraction of $a$: the set of points $z$ such
that $\Gamma _{z}$ terminates at $a$. Now, associate to each $z\in \mathbb{C}
$,\ the basin such that $z\in B(a_{z})$ and define two basins as neighbors
if they have a common gradient curve on their boundary. Then $N_{z}$ is the
number of basins neighbors of $B(a_{z})$, which equals the number of saddle
points of $U(z)$ connected by gradient curves to $a_{z}$. Since almost
surely each saddle is connected with two zeros, \emph{at least heuristically,%
} we have:%
\begin{equation*}
\mathbb{E}N_{z}=_{(1)}2\frac{\text{expected average number of saddle points}%
}{\text{expected number of zeros}}=R^{sadd/zeros}\text{.}
\end{equation*}%
By Theorem 1, $R^{sadd/zeros}=\frac{8}{3}$. Likewise, the number of basins
that meet at the same local maximum is%
\begin{equation*}
\mathbb{E}N_{z}^{\max }=_{(2)}2\frac{\text{expected average number of saddle
points}}{\text{expected number of local maxima}}=R^{sadd/local\max }\text{.}
\end{equation*}%
By Theorem 1, $R^{sadd/local\max }=8$.

\begin{remark}
To provide an answer for the questions in \cite{BO2} it is necessary to show
the identities $\mathbb{(}1\mathbb{)}$ and $\mathbb{(}2\mathbb{)}$.
\end{remark}

In another direction, the connections explored in this note may shed some
light regarding the ubiquitous presence of the factor $1/3$ in the first
terms of the expansion of the number of critical points under the so-called 
\emph{Chern connection }which, for entire functions in the Fock space, boils
down to the operator $\nabla _{z}^{\prime }$. Indeed, in the comments after
formulas (15) and (16) of \cite{DZS}, Douglas, Shiffman and Zelditch mention
that `\emph{It would be interesting to find a heuristic reason for the
factor 1/3}'. Flandrin's mean honeycomb model provides such an heuristic
reason in the planar case, and suggests that a geometric analogue of such
model may exist for the expected distribution of zeros and maxima of random
functions in general complex Riemann surfaces.

\subsection{Critical, saddle and local maxima ordinate values}

The observations leading to the proof of Theorem 1 allow, with a slight
extra effort, to understand the average distribution of the \emph{ordinate
values }of the critical, saddle and local maxima. By an ordinate critical
value we mean the value of $\left( Spec_{g}\mathcal{W}(z)\right) ^{1/2}$, at
the critical point $z_{c}$ 
\begin{equation*}
x_{c}=\left( Spec_{g}\mathcal{W}(z_{c})\right) ^{1/2}=\left\vert
F(z_{c})\right\vert e^{-\frac{\left\vert z_{c}\right\vert ^{2}}{2}}\in 
\mathbb{R}^{+}
\end{equation*}%
Formally, the expected density of the average ordinates of critical values
of local maxima of $\left( Spec_{g}\mathcal{W}(z)\right) ^{1/2}$ , $D(x)$,
is defined weakly on a compact set $\Omega \subset \mathbb{C}$ for a test
function $\psi $ by the identity: 
\begin{equation*}
\mathbb{E}\left\langle \frac{1}{\left\vert \Omega \right\vert }\sum_{\xi \in
\Omega :\nabla _{\mathcal{\xi }}^{\prime }F(\xi )=0}\delta _{\left\vert F(%
\mathcal{\xi })e^{-\frac{\left\vert \mathcal{\xi }\right\vert ^{2}}{2}%
}\right\vert },\psi \right\rangle =\mathbb{E}\left( \frac{1}{\left\vert
\Omega \right\vert }\sum_{\xi \in \Omega :\nabla _{\mathcal{\xi }}^{\prime
}F(\xi )=0}\psi \left\vert F(\mathcal{\xi })e^{-\frac{\left\vert \mathcal{%
\xi }\right\vert ^{2}}{2}}\right\vert \right) =\int_{\mathbb{R}^{+}}\psi
(x)D(x)dx
\end{equation*}%
and similarly for saddles and local maxima. The following theorem, which is
our second main result, describes the expected density distribution of
ordinate values of critical, local maxima and saddle points (which are not
probability distributions). The proof is based on methods of Feng and
Zelditch \cite{FZ,FZ1}.

\begin{theorem}
The expected density of the average ordinate critical values of local maxima
of $\left( Spec_{g}\mathcal{W}(z)\right) ^{1/2}$ is given by the function $%
D^{\max }$, defined on $\left[ 0,\infty \right] $ by 
\begin{equation*}
D^{\max }(x)=2x\left( x^{2}-2+2e^{-\frac{x^{2}}{2}}\right) e^{-x^{2}}\text{,}
\end{equation*}%
of the ordinate values of saddle points, by%
\begin{equation*}
D^{sadd}(x)=4xe^{-\frac{3}{2}x^{2}}\text{,}
\end{equation*}%
and of the ordinates of critical values by 
\begin{equation*}
D^{crit}(x)=D^{sadd}(x)+D^{\max }(x)=2x\left( x^{2}-2+4e^{-\frac{x^{2}}{2}%
}\right) e^{-x^{2}}\text{.}
\end{equation*}
\end{theorem}

\section{Proof of Theorems 1 and 2}

\subsection{Critical points under the Chern connection}

The so called \emph{Chern connection} under the Bargmann-Fock metric is
given as%
\begin{equation*}
\nabla _{z}=\nabla _{z}^{\prime }+\nabla _{z}^{\prime \prime }
\end{equation*}%
with%
\begin{equation*}
\nabla _{z}^{\prime }=\partial _{z}-\overline{z}
\end{equation*}%
and%
\begin{equation*}
\nabla _{z}^{\prime \prime }=\partial _{\overline{z}}\text{.}
\end{equation*}%
Following \cite{DZS},\ we define a critical point of $F(z)$ under the Chern
connection as a solution of 
\begin{equation*}
\nabla _{z}F(z)=0\text{.}
\end{equation*}%
If $F(z)$ is entire as in the case of the GEF, then $\nabla _{z}^{\prime
\prime }F(z)=0$ and $\nabla _{z}F(z)=\nabla _{z}^{\prime }F(z)$, simplifying
the critical points equation:%
\begin{equation*}
\nabla _{z}^{\prime }F(z)=\partial _{z}F(z)-\overline{z}F(z)=0\text{.}
\end{equation*}

\begin{proposition}
The nonzero critical points of $H(z)$ and $U(z)$ are given by the equation $%
\nabla _{z}^{\prime }F(z)=0$. All nonzero critical points which are local
extrema points of $H(z)$ and $U(z)$ are necessarily local maxima. Thus, all
local minima are zeros of $F(z)$ and cannot be critical points given by the
equation $\nabla _{z}^{\prime }F(z)=0$.
\end{proposition}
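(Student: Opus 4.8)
The plan is to reduce everything to the real-valued potential $U(z)=\log\left\vert F(z)\right\vert -\tfrac{1}{2}\left\vert z\right\vert ^{2}$ and to exploit the identity $H(z)=e^{2U(z)}$. Since $t\mapsto e^{2t}$ is smooth and strictly increasing, away from the zeros of $F$ the functions $H$ and $U$ have exactly the same critical points, and at each such point $\operatorname{Hess}(H)$ is a \emph{positive} multiple of $\operatorname{Hess}(U)$ (the second-order term $\phi''(U)\,\nabla U\otimes\nabla U$ drops out because $\nabla U=0$ there); hence $H$ and $U$ share the same local maxima, local minima and saddles, and it suffices to analyse $U$ on $\{F\neq0\}$. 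First I would use that $U$ is real-valued, so that $\nabla U=0$ is equivalent to $\partial _{z}U=0$. A Wirtinger computation from $\log\left\vert F\right\vert =\tfrac{1}{2}(\log F+\log\overline{F})$, using $\partial_z\overline{F}=0$, gives on $\{F\neq0\}$
\begin{equation*}
\partial _{z}U=\tfrac{1}{2}\frac{F^{\prime }(z)}{F(z)}-\tfrac{1}{2}\overline{z}=\frac{1}{2F(z)}\left( \partial _{z}-\overline{z}\right) F(z)=\frac{\nabla _{z}^{\prime }F(z)}{2F(z)}.
\end{equation*}
Since $F(z)\neq 0$ on this set, $\partial_z U=0$ is equivalent to $\nabla_z'F(z)=0$, which establishes that the nonzero critical points of $H$ and $U$ are precisely the solutions of $\nabla_z'F(z)=0$.

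The second step is a superharmonicity (curvature) computation. Applying $\partial _{\overline{z}}$ and then $\partial _{z}$, the anti-holomorphic term $\tfrac{1}{2}\overline{F^{\prime }}/\overline{F}$ is annihilated by $\partial _{z}$ away from the zeros, so only the contribution of $-\tfrac{1}{2}\left\vert z\right\vert ^{2}$ survives and
\begin{equation*}
\Delta U=4\,\partial _{z}\partial _{\overline{z}}U=4\left( -\tfrac{1}{2}\right) =-2\qquad \text{on }\{F\neq 0\}.
\end{equation*}
Thus $U$ is \emph{strictly superharmonic} off the zero set. At any critical point the trace of the real Hessian of $U$ equals $\Delta U=-2<0$, so the Hessian cannot be positive semidefinite; consequently no nonzero critical point can be a local minimum, and every nonzero critical point that is a local extremum must be a local maximum. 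By $H=e^{2U}$ the same conclusion transfers to $H$.

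Finally I would account for the zeros of $F$. There $H=0$ attains its (global, hence local) minimum value since $H\geq 0$, while $U=-\infty$, so these points are exactly the local minima; combined with the previous step this shows that \emph{all} local minima of $H$ and $U$ are zeros of $F$. To see that such a zero $a$ does not solve $\nabla_z'F=0$, note $\nabla_z'F(a)=F^{\prime }(a)-\overline{a}F(a)=F^{\prime }(a)$, which is nonzero because the zeros of the GEF are almost surely simple (equivalently, $F$ and $\nabla_z'F$ have no common zeros). The single point demanding care — the only genuine obstacle in an otherwise elementary argument — is the zero set of $F$: there $\log\left\vert F\right\vert$ is singular, so the superharmonicity identity holds only on $\{F\neq 0\}$ (as a distribution one has $\Delta U=-2+2\pi\sum_{a:F(a)=0}\delta_a$), and the disjointness of the minima from $\{\nabla_z'F=0\}$ rests on the simplicity of the zeros rather than on any pointwise identity. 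I note that separating the remaining maxima from saddles is not needed here, as it only requires the sign of the Hessian determinant and is carried out in the proof of Theorem 1.
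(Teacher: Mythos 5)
Your proposal is correct and follows essentially the same route as the paper's proof: the Wirtinger computation identifying the nonzero critical points with the solutions of $\nabla_z'F(z)=0$, the identity $\Delta U=-2$ off the zero set to exclude nonzero local minima, and the almost-sure simplicity of the zeros of $F$ to show they cannot satisfy $\nabla_z'F=0$. The only cosmetic differences are that you rule out minima via the trace of the Hessian rather than invoking superharmonicity directly, and you make the transfer between $H=e^{2U}$ and $U$ explicit.
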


\begin{proof}
The critical points equation%
\begin{equation*}
0=\left\vert \nabla \log \left\vert F(z)e^{-\frac{\left\vert z\right\vert
^{2}}{2}}\right\vert \right\vert =\left\vert \frac{\partial _{z}F(z)}{F(z)}-%
\overline{z}\right\vert \text{,}
\end{equation*}%
where%
\begin{equation*}
\nabla f=\frac{\partial f}{\partial _{z}}+\overline{\frac{\partial f}{%
\partial _{\overline{z}}}}\text{,}
\end{equation*}%
is equivalent, for $F(z)\neq 0$, to 
\begin{equation*}
\partial _{z}F(z)-\overline{z}F(z)=0\Leftrightarrow \nabla _{z}^{\prime
}F(z)=0
\end{equation*}%
Likewise, the critical point equation%
\begin{equation*}
0=\left\vert \nabla \left\vert F(z)\right\vert ^{2}e^{-\left\vert
z\right\vert ^{2}}\right\vert =2e^{-\left\vert z\right\vert ^{2}}\overline{F}%
(z)(\nabla _{z}F(z))
\end{equation*}%
is equivalent, for $F(z)\neq 0$, to $\nabla _{z}^{\prime }F(z)=0$. Thus, the
critical points of%
\begin{equation*}
H(z)=\left\vert F(z)\right\vert ^{2}e^{-\left\vert z\right\vert ^{2}}
\end{equation*}%
are the same as those of%
\begin{equation*}
U(z)=\log \left\vert F(z)e^{-\frac{\left\vert z\right\vert ^{2}}{2}%
}\right\vert =\log \left\vert F(z)\right\vert -\frac{\left\vert z\right\vert
^{2}}{2}
\end{equation*}%
Now observe that, if $F(z)\neq 0$, the Laplacian of $\log \left\vert F(z)e^{-%
\frac{\left\vert z\right\vert ^{2}}{2}}\right\vert $ is negative: 
\begin{equation*}
\Delta \log \left\vert F(z)e^{-\frac{\left\vert z\right\vert ^{2}}{2}%
}\right\vert =\Delta \log \left\vert F(z)\right\vert -2=-2\text{.}
\end{equation*}%
This implies that $\log \left\vert F(z)e^{-\frac{\left\vert z\right\vert ^{2}%
}{2}}\right\vert $ is super harmonic outside the zero set of $F$. Thus, all
local minima of $\log \left\vert F(z)e^{-\frac{\left\vert z\right\vert ^{2}}{%
2}}\right\vert $ are attained when $F(z)=0$, and the same happens with the
local minima of $\left\vert F(z)\right\vert ^{2}e^{-\left\vert z\right\vert
^{2}}$. As a result, all nonzero critical points which are local extrema
points of $H(z)$ and $U(z)$ are local maxima. Finally, by \cite[Lemma 2.4.1]%
{GAFbook} all the zeros of $F(z)$ are simple. It follows that if $F(z)=0$
then $\nabla _{z}^{\prime }F(z)\neq 0$.
\end{proof}

The proofs of theorems 1 and 2 in the next sections will use Kac-Rice
formulas \cite{level} and Hessians \cite{Adler}. They are inspired by the
scheme of \cite{DZS,FZ,FZ1}, but in our much simple setting, we managed to
obtain a direct, more elementary proof.

\subsection{Proof of Theorem 1}

The average of critical points is given by the expected number of zeros of $%
\nabla _{z}^{\prime }F(z)=0$.\ Since $\nabla _{z}^{\prime }F(z)$ is not
analytic, to compute the expected number of its zeros one cannot use the
Edelman-Kostlan formula, as usually done for Gaussian analytic functions.
The key of the proof is to start with an appropriate Hessian matrix whose
determinant sign allows to separate maxima, minima and saddles, and show
that, at the critical points $\nabla _{z}^{\prime }F(z_{c})=0$, this Hessian
determinant is proportional to $\det DF(z_{c})$, which, under the critical
point $\nabla _{z}^{\prime }F(z_{c})=0$ constraint, equals the determinant
of the complex differential matrix which appears on the Kac-Rice formula for
the average of critical points.

The nonzero critical points of $U(z)=\log \left\vert F(z)e^{-\frac{%
\left\vert z\right\vert ^{2}}{2}}\right\vert $ are given by $\nabla
_{z}^{\prime }F(z)=0$. The same is true for $2U(z)=\log \left\vert F(z)e^{-%
\frac{\left\vert z\right\vert ^{2}}{2}}\right\vert ^{2}$. The corresponding
Hessian matrix is%
\begin{eqnarray*}
D^{Hess}(2U)(z) &=&\left[ 
\begin{array}{cc}
\partial _{z}\overline{\partial _{z}\log \left\vert F(z)e^{-\frac{\left\vert
z\right\vert ^{2}}{2}}\right\vert ^{2}} & \partial _{\overline{z}}\overline{%
\partial _{z}\log \left\vert F(z)e^{-\frac{\left\vert z\right\vert ^{2}}{2}%
}\right\vert ^{2}} \\ 
\partial _{z}\overline{\partial _{\overline{z}}\log \left\vert F(z)e^{-\frac{%
\left\vert z\right\vert ^{2}}{2}}\right\vert ^{2}} & \partial _{\overline{z}}%
\overline{\partial _{\overline{z}}\log \left\vert F(z)e^{-\frac{\left\vert
z\right\vert ^{2}}{2}}\right\vert ^{2}}%
\end{array}%
\right]  \\
&=&\left[ 
\begin{array}{cc}
\partial _{z}\left( \frac{1}{\overline{F(z})}\overline{\nabla _{z}^{\prime
}F(z)}\right)  & \partial _{\overline{z}}\left( \frac{1}{\overline{F(z})}%
\overline{\nabla _{z}^{\prime }F(z)}\right)  \\ 
\partial _{z}\left( \frac{1}{F(z)}\nabla _{z}^{\prime }F(z)\right)  & 
\partial _{\overline{z}}\left( \frac{1}{F(z)}\nabla _{z}^{\prime
}F(z)\right) 
\end{array}%
\right] \text{.}
\end{eqnarray*}%
Observing that%
\begin{equation*}
\partial _{z}\left( \frac{1}{F(z)}\nabla _{z}^{\prime }F(z)\right) =-\frac{%
\partial _{z}F(z)}{F(z)^{2}}\nabla _{z}^{\prime }F(z)+\frac{1}{F(z)}\partial
_{z}\left( \nabla _{z}^{\prime }F(z)\right) \text{,}
\end{equation*}%
one can evaluate the entries of the above matrix at the critical points $%
\nabla _{z}^{\prime }F(z_{c})=0$, leading to%
\begin{eqnarray*}
\det D^{Hess}(2U)(z_{c}) &=&\det \left[ 
\begin{array}{cc}
\frac{1}{\overline{F(z_{c}})}\partial _{z}\overline{\nabla _{z}^{\prime
}F(z_{c})} & \frac{1}{\overline{F(z_{c}})}\partial _{\overline{z}}\overline{%
\nabla _{z}^{\prime }F(z_{c})} \\ 
\frac{1}{F(z_{c})}\partial _{z}\nabla _{z}^{\prime }F(z_{c}) & \frac{1}{%
F(z_{c})}\partial _{\overline{z}}\nabla _{z}^{\prime }F(z_{c})%
\end{array}%
\right]  \\
&=&\frac{1}{\left\vert F(z_{c})\right\vert ^{2}}\det \left[ 
\begin{array}{cc}
\nabla _{\overline{z}}^{\prime \prime }\overline{\nabla _{z}^{\prime
}F(z_{c})} & \nabla _{\overline{z}}^{\prime }\overline{\nabla _{z}^{\prime
}F(z_{c})} \\ 
\nabla _{z}^{\prime }\nabla _{z}^{\prime }F(z_{c}) & \nabla _{z}^{\prime
\prime }\nabla _{z}^{\prime }F(z_{c})%
\end{array}%
\right] \text{.}
\end{eqnarray*}%
We have concluded that 
\begin{equation}
\left\vert F(z_{c})\right\vert ^{2}\det D^{Hess}(2U)(z_{c})=\left\vert
\nabla _{z}^{\prime \prime }\nabla _{z}^{\prime }F(z_{c})\right\vert
^{2}-\left\vert \nabla _{z}^{\prime }\nabla _{z}^{\prime
}F(z_{c})\right\vert ^{2}  \label{crit}
\end{equation}%
coincides with the determinant of $D(F)(z_{c})$ at the critical points $%
\nabla _{z}^{\prime }F(z_{c})=0$, where%
\begin{equation}
D(F)(z)=\left[ 
\begin{array}{cc}
\nabla _{\overline{z}}^{\prime \prime }\overline{\nabla _{z}^{\prime }F(z)}
& \nabla _{\overline{z}}^{\prime }\overline{\nabla _{z}^{\prime }F(z)} \\ 
\nabla _{z}^{\prime }\nabla _{z}^{\prime }F(z) & \nabla _{z}^{\prime \prime
}\nabla _{z}^{\prime }F(z)%
\end{array}%
\right] \text{.}  \label{Dhess}
\end{equation}%
Thus, $\det D^{Hess}(2U)(z_{c})$ and $\det DF(z_{c})$ have the same sign.
This allows to separate maxima, minima and saddles as follows. If $\det
DF(z)>0$,\ then both eigenvalues of $DF(z)$ (and consequently, of $\det
D^{Hess}(2U)(z_{c})$) are negative (if they were both positive then $z$
would be a local minima, and this is not possible by Proposition 2.1). Thus,
one has a non-zero local extrema that cannot be a local minimum and $z$ is a 
\emph{local maximum}. Thus, $z$ is a local maximum if 
\begin{equation*}
\det DF(z)>0\text{,}
\end{equation*}%
or%
\begin{equation}
\left\vert \nabla _{z}^{\prime \prime }\nabla _{z}^{\prime }F(z)\right\vert
^{2}>\left\vert \nabla _{z}^{\prime }\nabla _{z}^{\prime }F(z)\right\vert
^{2}\text{.}  \label{localmaxima}
\end{equation}%
Likewise, if 
\begin{equation*}
\det DF(z)<0\text{,}
\end{equation*}%
or, equivalently, if%
\begin{equation}
\left\vert \nabla _{z}^{\prime \prime }\nabla _{z}^{\prime }F(z)\right\vert
^{2}<\left\vert \nabla _{z}^{\prime }\nabla _{z}^{\prime }F(z)\right\vert
^{2}\text{,}  \label{saddle}
\end{equation}%
then $DF(z)$ has both positive and negative eigenvalues and $z$ is a \emph{%
saddle point}. Thus, saddle points are characterized by the condition (\ref%
{saddle}). Furthermore, at a critical point $\nabla _{z}^{\prime }F(z_{c})=0$%
,%
\begin{equation*}
\det DF(z_{c})=\det \left[ 
\begin{array}{cc}
\nabla _{\overline{z}}^{\prime \prime }\overline{\nabla _{z}^{\prime
}F(z_{c})} & \nabla _{\overline{z}}^{\prime }\overline{\nabla _{z}^{\prime
}F(z_{c})} \\ 
\nabla _{z}^{\prime }\nabla _{z}^{\prime }F(z_{c}) & \nabla _{z}^{\prime
\prime }\nabla _{z}^{\prime }F(z_{c})%
\end{array}%
\right] =\det \left[ 
\begin{array}{cc}
\partial _{z}\overline{\nabla _{z}^{\prime }F(z_{c})} & \partial _{z}%
\overline{\nabla _{z}^{\prime }F(z_{c})} \\ 
\partial _{\overline{z}}\nabla _{z}^{\prime }F(z_{c}) & \partial _{\overline{%
z}}\overline{\nabla _{z}^{\prime }F(z_{c})}%
\end{array}%
\right] \text{,}
\end{equation*}%
We will now consider the 3-dimensional Gaussian process 
\begin{equation}
(\nabla _{z}^{\prime }F(z),\nabla _{z}^{\prime }\nabla _{z}^{\prime
}F(z),\nabla _{z}^{\prime \prime }\nabla _{z}^{\prime }F(z))  \label{vector}
\end{equation}%
with correlation kernel 
\begin{equation*}
K(z,w)=\mathbb{E}(\nabla _{z}^{\prime }F(z)\nabla _{z}^{\prime }F(w))=\nabla
_{z}^{\prime }\nabla _{\overline{w}}^{\prime }e^{z\overline{w}%
}=L_{1}^{0}(\left\vert z-w\right\vert ^{2})e^{z\overline{w}}\text{,}
\end{equation*}%
Using the vector (\ref{vector}), the Kac-Rice formula for the average of
critical points can be given as the conditional expectation of $\det 
\mathcal{D}(\nabla _{z}^{\prime }F)$ at a critical point as in \cite[(90)]%
{DZS} or using \cite[Theorem 6.2]{level} as in \cite[Lemma 2.3]{GWHF} (the $%
\frac{1}{\pi }$ factor is included in the measure $d\nu (z)$ of (\ref%
{Expectation})). Thus,%
\begin{equation}
\mathcal{N}^{crit}=\mathbb{E}(\left\vert \det DF\right\vert \delta
_{0}(\nabla _{z}^{\prime }F))\text{.}  \label{Ecritic}
\end{equation}%
Gathering the contents of this discussion, once we compute the Gaussian
density $p(0,v,u)$, by (\ref{localmaxima}), the expected number of local
maxima can be obtained using the following Kac-Rice formula:%
\begin{eqnarray}
\mathcal{N}^{\max } &=&\mathbb{E}(\left\vert \det D(F):\det
D(F)>0\right\vert \delta _{0}(\nabla _{z}^{\prime }F))  \notag \\
&=&\mathbb{E}\left[ \left\vert \left\vert \nabla _{z}^{\prime \prime }\nabla
_{z}^{\prime }F\right\vert ^{2}-\left\vert \nabla _{z}^{\prime }\nabla
_{z}^{\prime }F\right\vert ^{2}:\left\vert \nabla _{z}^{\prime \prime
}\nabla _{z}^{\prime }F(z)\right\vert ^{2}>\left\vert \nabla _{z}^{\prime
}\nabla _{z}^{\prime }F(z)\right\vert ^{2}\right\vert \delta _{0}(\nabla
_{z}^{\prime }F)\right]   \notag \\
&=&\int_{\mathbb{C}^{2}\cap \{\left\vert u\right\vert ^{2}>\left\vert
v\right\vert ^{2}\}}(\left\vert u\right\vert ^{2}-\left\vert v\right\vert
^{2})p(0,v,u)dvdu\text{.}  \label{max}
\end{eqnarray}%
and from (\ref{saddle}), the expected number of saddle points is%
\begin{eqnarray}
\mathcal{N}^{sadd} &=&\mathbb{E}(\left\vert \det D(F):\det D(F)<0\right\vert
\delta _{0}(\nabla _{z}^{\prime }F))  \notag \\
&=&\mathbb{E}\left[ \left\vert \left\vert \nabla _{z}^{\prime \prime }\nabla
_{z}^{\prime }F\right\vert ^{2}-\left\vert \nabla _{z}^{\prime }\nabla
_{z}^{\prime }F\right\vert ^{2}:\left\vert \nabla _{z}^{\prime \prime
}\nabla _{z}^{\prime }F(z)\right\vert ^{2}<\left\vert \nabla _{z}^{\prime
}\nabla _{z}^{\prime }F(z)\right\vert ^{2}\right\vert \delta _{0}(\nabla
_{z}^{\prime }F)\right]   \notag \\
&=&\int_{\mathbb{C}^{2}\cap \{\left\vert u\right\vert ^{2}<\left\vert
v\right\vert ^{2}\}}(\left\vert v\right\vert ^{2}-\left\vert u\right\vert
^{2})p(0,v,u)dvdu\text{.}  \label{saddl}
\end{eqnarray}%
The Gaussian measure $p(0,v,u)$ is given in terms of the covariance matrix 
\begin{equation*}
\Lambda =%
\begin{pmatrix}
1 \\ 
\nabla _{z}^{\prime } \\ 
\nabla _{z}^{\prime \prime }%
\end{pmatrix}%
\begin{pmatrix}
1 & \nabla _{\overline{w}}^{\prime } & \nabla _{\overline{w}}^{\prime \prime
}%
\end{pmatrix}%
K(z,w)=%
\begin{pmatrix}
1 & \nabla _{\overline{w}}^{\prime } & \nabla _{\overline{w}}^{\prime \prime
} \\ 
\nabla _{z}^{\prime } & \nabla _{z}^{\prime }\nabla _{\overline{w}}^{\prime }
& \nabla _{z}^{\prime }\nabla _{\overline{w}}^{\prime \prime } \\ 
\nabla _{z}^{\prime \prime } & \nabla _{z}^{\prime \prime }\nabla _{%
\overline{w}}^{\prime } & \nabla _{z}^{\prime \prime }\nabla _{\overline{w}%
}^{\prime \prime }%
\end{pmatrix}%
K(z,w)
\end{equation*}%
and its inverse by 
\begin{equation*}
p(0,v,u)=\frac{e^{-\left\langle \left( 
\begin{array}{c}
0 \\ 
v \\ 
u%
\end{array}%
\right) ,\Lambda ^{-1}\left( 
\begin{array}{c}
0 \\ 
\overline{v} \\ 
\overline{u}%
\end{array}%
\right) \right\rangle }}{\det \Lambda }
\end{equation*}%
To compute $p(0,v,u)$ we observe that 
\begin{eqnarray*}
K(z,z) &=&e^{\left\vert z\right\vert ^{2}} \\
\nabla _{z}^{\prime }K(z,w)_{|z=w} &=&0 \\
\nabla _{z}^{\prime \prime }K(z,w)_{|z=w} &=&0 \\
\nabla _{z}^{\prime \prime }\overline{\nabla _{w}^{\prime \prime }}%
K(z,w)_{|z=w} &=&e^{\left\vert z\right\vert ^{2}} \\
\nabla _{z}^{\prime }\overline{\nabla _{w}^{\prime }}K(z,w)_{|z=w}
&=&2e^{\left\vert z\right\vert ^{2}} \\
\nabla _{z}^{\prime }\overline{\nabla _{w}^{\prime \prime }}K(z,w)_{|z=w}
&=&0\text{,}
\end{eqnarray*}%
and we multiply the matrix $\Lambda $ by $e^{-\left\vert z\right\vert ^{2}}$%
, since this keeps the measure $p(0,v,u)dvdu$ invariant, leading to the
following covariance matrix and respective inverse:

\begin{equation*}
\Lambda =\left[ 
\begin{array}{ccc}
1 & 0 & 0 \\ 
0 & 2 & 0 \\ 
0 & 0 & 1%
\end{array}%
\right] \text{, \ \ \ \ \ \ \ }\Lambda ^{-1}=\left[ 
\begin{array}{ccc}
1 & 0 & 0 \\ 
0 & \frac{1}{2} & 0 \\ 
0 & 0 & 1%
\end{array}%
\right]
\end{equation*}%
We arrive thus at%
\begin{equation*}
p(0,v,u)=\frac{1}{2}e^{-\frac{\left\vert v\right\vert ^{2}}{2}-\left\vert
u\right\vert ^{2}}\text{.}
\end{equation*}%
For the local maxima, we obtain from (\ref{max}):%
\begin{eqnarray*}
\mathcal{N}^{\max }(z) &=&\frac{1}{2}\int_{\mathbb{C}^{2}\cap \{\left\vert
u\right\vert ^{2}>\left\vert v\right\vert ^{2}\}}(\left\vert u\right\vert
^{2}-\left\vert v\right\vert ^{2})e^{-\frac{\left\vert v\right\vert ^{2}}{2}%
-\left\vert u\right\vert ^{2}}dvdu \\
&=&\int_{x>0}\int_{t>2x}(t-2x)e^{-t-x}dtdx \\
&=&\int_{\mathbb{R}^{+}}e^{-3x}dx=\frac{1}{3}\text{.}
\end{eqnarray*}%
Likewise, (\ref{saddl}) gives the average number of saddle points:%
\begin{eqnarray*}
\mathcal{N}^{sadd}(z) &=&\frac{1}{2}\int_{\mathbb{C}^{2}\cap \{\left\vert
u\right\vert ^{2}<\left\vert v\right\vert ^{2}\}}(\left\vert v\right\vert
^{2}-\left\vert u\right\vert ^{2})e^{-\frac{\left\vert v\right\vert ^{2}}{2}%
-\left\vert u\right\vert ^{2}}dvdu \\
&=&\int_{x>0}\int_{t<2x}(2x-t)e^{-t-x}dtdx \\
&=&2\int_{\mathbb{R}^{+}}e^{-3/2x}dx=\frac{4}{3}\text{.}
\end{eqnarray*}%
Now, from Proposition 1 all nonzero critical points which are local extrema
points of $H(z)$ and $U(z)$ are necessarily local maxima. Thus, critical
points can only be saddles or local maxima and the expected number of
critical points is:%
\begin{equation*}
\mathcal{N}^{crit}=\mathcal{N}^{sadd}+\mathcal{N}^{\max }=1/3+4/3=5/3\text{.}
\end{equation*}%
Thus, we have an average of $5/3$ critical points. Among those, $1/3$ are
local maxima and $4/3$ saddle points.

\subsection{Proof of Theorem 2}

We consider the empirical point measure defined by the values of $\left\vert
F(\mathcal{\xi })\right\vert e^{-\frac{\left\vert \mathcal{\xi }\right\vert
^{2}}{2}}$ of critical points for $\xi $ in a compact set $\Omega \subset 
\mathbb{C}$, as the finite sum normalized by the Lesbegue measure of $\Omega 
$, $\left\vert \Omega \right\vert $: 
\begin{equation*}
\frac{1}{\left\vert \Omega \right\vert }\sum_{\xi \in \Omega :\nabla _{%
\mathcal{\xi }}^{\prime }F(\xi )=0}\delta _{\left\vert F(\mathcal{\xi })e^{-%
\frac{\left\vert \mathcal{\xi }\right\vert ^{2}}{2}}\right\vert }\text{.}
\end{equation*}%
The density of the ordinates of critical values, $D(x)$, is defined weakly
by the identity: 
\begin{equation*}
\mathbb{E}\left\langle \frac{1}{\left\vert \Omega \right\vert }\sum_{\xi \in
\Omega :\nabla _{\mathcal{\xi }}^{\prime }F(\xi )=0}\delta _{\left\vert F(%
\mathcal{\xi })e^{-\frac{\left\vert \mathcal{\xi }\right\vert ^{2}}{2}%
}\right\vert },\psi \right\rangle =\mathbb{E}\left( \frac{1}{\left\vert
\Omega \right\vert }\sum_{\xi \in \Omega :\nabla _{\mathcal{\xi }}^{\prime
}F(\xi )=0}\psi \left\vert F(\mathcal{\xi })e^{-\frac{\left\vert \mathcal{%
\xi }\right\vert ^{2}}{2}}\right\vert \right) =\int_{\mathbb{R}^{+}}\psi
(x)D(x)dx
\end{equation*}%
We first consider the density $D{}^{\wedge }(u)$, defined by%
\begin{equation*}
\mathbb{E}\left\langle \frac{1}{\left\vert \Omega \right\vert }\sum_{\xi \in
\Omega :\nabla _{\mathcal{\xi }}^{\prime }F(\xi )=0}\delta _{F(\mathcal{\xi }%
)e^{-\frac{\left\vert \mathcal{\xi }\right\vert ^{2}}{2}}},\psi
\right\rangle =\int_{\mathbb{C}}\psi (u)D^{\wedge }(u)du
\end{equation*}%
and then we will integrate out the angle variable by $D(x)=\int_{0}^{2\pi
}D^{\wedge }(x,\theta )xd\theta $, after observing that, for radial $\psi $, 
\begin{equation}
\int_{\mathbb{R}^{+}}\psi (x)D(x)dx=\mathbb{E}\left( \frac{1}{\left\vert
\Omega \right\vert }\sum_{\xi \in \Omega :\nabla _{\mathcal{\xi }}^{\prime
}F(\xi )=0}\psi \left\vert F(\mathcal{\xi })e^{-\frac{\left\vert \mathcal{%
\xi }\right\vert ^{2}}{2}}\right\vert \right) =\int_{\mathbb{C}}\psi
(u)D^{\wedge }(u)du\text{.}  \label{D}
\end{equation}%
When $\nabla _{z}^{\prime }F(z)=0$, 
\begin{equation*}
\nabla _{z}^{\prime }(e^{-\frac{\left\vert z\right\vert ^{2}}{2}}\nabla
_{z}^{\prime }F(z))=e^{-\frac{\left\vert z\right\vert ^{2}}{2}}\nabla
_{z}^{\prime }\nabla _{z}^{\prime }F(z)-\frac{\overline{z}}{2}e^{-\frac{%
\left\vert z\right\vert ^{2}}{2}}\nabla _{z}^{\prime }F(z)=e^{-\frac{%
\left\vert z\right\vert ^{2}}{2}}\nabla _{z}^{\prime }\nabla _{z}^{\prime
}F(z)
\end{equation*}%
and%
\begin{equation*}
\nabla _{z}^{\prime \prime }e^{-\frac{\left\vert z\right\vert ^{2}}{2}%
}\nabla _{z}^{\prime }F(z)=e^{-\frac{\left\vert z\right\vert ^{2}}{2}}\nabla
_{z}^{\prime \prime }\left( \partial _{z}F(z)-\overline{z}F(z)\right) -\frac{%
z}{2}e^{-\frac{\left\vert z\right\vert ^{2}}{2}}\nabla _{z}^{\prime
}F(z)=e^{-\frac{\left\vert z\right\vert ^{2}}{2}}\nabla _{z}^{\prime \prime
}\nabla _{z}^{\prime }F(z)=-e^{-\frac{\left\vert z\right\vert ^{2}}{2}}F(z)%
\text{,}
\end{equation*}%
so that we can write the 3-dimensional Gaussian process 
\begin{equation*}
(e^{-\frac{\left\vert z\right\vert ^{2}}{2}}\nabla _{z}^{\prime }F(z),\nabla
_{z}^{\prime }(e^{-\frac{\left\vert z\right\vert ^{2}}{2}}\nabla
_{z}^{\prime }F(z)),\nabla _{z}^{\prime \prime }(e^{-\frac{\left\vert
z\right\vert ^{2}}{2}}\nabla _{z}^{\prime }F(z)))=(e^{-\frac{\left\vert
z\right\vert ^{2}}{2}}\nabla _{z}^{\prime }F(z),e^{-\frac{\left\vert
z\right\vert ^{2}}{2}}\nabla _{z}^{\prime }\nabla _{z}^{\prime }F(z),-e^{-%
\frac{\left\vert z\right\vert ^{2}}{2}}F(z))\text{,}
\end{equation*}%
The Kac-Rice formula is not changed by the resulting multiplication of the
correlation kernel by $e^{-\frac{\left\vert z\right\vert ^{2}}{2}-\frac{%
\left\vert w\right\vert ^{2}}{2}}$. To obtain the density of the ordinates
of critical values, using \cite[Theorem 6.4]{level} gives%
\begin{eqnarray*}
&&\mathbb{E}\left( \frac{1}{\left\vert \Omega \right\vert }\sum_{\xi \in
\Omega :\nabla _{\mathcal{\xi }}^{\prime }F(\xi )=0}\psi \left( F(\mathcal{%
\xi })e^{-\frac{\left\vert \mathcal{\xi }\right\vert ^{2}}{2}}\right)
\right)  \\
&=&\frac{1}{\pi \left\vert \Omega \right\vert }\int_{\Omega }\mathbb{E}\left[
\left\vert \left\vert e^{-\frac{\left\vert \mathcal{\xi }\right\vert ^{2}}{2}%
}\nabla _{\mathcal{\xi }}^{\prime }\nabla _{\mathcal{\xi }}^{\prime }F(%
\mathcal{\xi })\right\vert ^{2}-\left\vert e^{-\frac{\left\vert \mathcal{\xi 
}\right\vert ^{2}}{2}}F(\mathcal{\xi })\right\vert ^{2}\right\vert \psi (e^{-%
\frac{\left\vert \mathcal{\xi }\right\vert ^{2}}{2}}F(\mathcal{\xi }))\delta
_{0}(\nabla _{\mathcal{\xi }}^{\prime }F(\mathcal{\xi }))\right] d\mathcal{%
\xi } \\
&=&\frac{1}{\pi \left\vert \Omega \right\vert }\int_{\Omega }\int_{\mathbb{C}%
^{2}}\psi (u)\left\vert \left\vert v\right\vert ^{2}-\left\vert u\right\vert
^{2}\right\vert p(0,v,u)d\mathcal{\xi }dvdu\text{.} \\
&=&\int_{\mathbb{C}}\psi (u)\left[ \frac{1}{\pi \left\vert \Omega
\right\vert }\int_{\Omega }\int_{\mathbb{C}}\left\vert \left\vert
v\right\vert ^{2}-\left\vert u\right\vert ^{2}\right\vert p(0,v,u)d\mathcal{%
\xi }dv\right] du
\end{eqnarray*}%
Thus, from (\ref{D}),%
\begin{equation*}
D^{\wedge }(u)=\frac{1}{\pi \left\vert \Omega \right\vert }\int_{\Omega
}\int_{\mathbb{C}}\left\vert \left\vert v\right\vert ^{2}-\left\vert
u\right\vert ^{2}\right\vert p(0,v,u)d\xi dv
\end{equation*}%
and, from the previous section,%
\begin{equation*}
p(0,v,u)=\frac{1}{2}e^{-\frac{\left\vert v\right\vert ^{2}}{2}-\left\vert
u\right\vert ^{2}}\text{.}
\end{equation*}%
Writing $u$ in polar coordinates $(x,\theta )$ as%
\begin{equation*}
u=e^{i\theta }x\text{, }x\geq 0\text{,}
\end{equation*}%
we obtain the expected density as a function of the ordinates $x\in \mathbb{R%
}^{+}$:%
\begin{equation*}
D(x)=\int_{0}^{2\pi }D^{\wedge }(x,\theta )xd\theta \text{.}
\end{equation*}%
Now,%
\begin{eqnarray*}
D(x) &=&\frac{1}{2\pi \left\vert \Omega \right\vert }\int_{0}^{2\pi
}\int_{\Omega }\int_{\mathbb{C}}\left\vert \left\vert v\right\vert
^{2}-\left\vert x\right\vert ^{2}\right\vert xe^{-\frac{\left\vert
v\right\vert ^{2}}{2}-\left\vert x\right\vert ^{2}}dvd\xi d\theta  \\
&=&2xe^{-\left\vert x\right\vert ^{2}}\int_{\mathbb{C}}\left\vert
2\left\vert v\right\vert ^{2}-\left\vert x\right\vert ^{2}\right\vert
e^{-\left\vert v\right\vert ^{2}}dv \\
&=&2xe^{-\left\vert x\right\vert ^{2}}\int_{t>0}\left\vert 2t-\left\vert
x\right\vert ^{2}\right\vert e^{-t}dt \\
&=&2xe^{-\left\vert x\right\vert ^{2}}\left( \int_{t>\frac{\left\vert
x\right\vert ^{2}}{2}}\left( 2t-\left\vert x\right\vert ^{2}\right)
e^{-t}dt+\int_{t<\frac{\left\vert x\right\vert ^{2}}{2}}\left( \left\vert
x\right\vert ^{2}-2t\right) e^{-t}dt\right) \text{.}
\end{eqnarray*}%
For the first integral in brackets,%
\begin{eqnarray*}
\int_{t>\frac{\left\vert x\right\vert ^{2}}{2}}\left( 2t-\left\vert
x\right\vert ^{2}\right) e^{-t}dt &=&\int_{t>\frac{\left\vert x\right\vert
^{2}}{2}}2te^{-t}dt-\left\vert x\right\vert ^{2}\int_{t>\frac{\left\vert
x\right\vert ^{2}}{2}}e^{-t}dt \\
&=&2(1+\frac{\left\vert x\right\vert ^{2}}{2})e^{-\frac{\left\vert
x\right\vert ^{2}}{2}}-\left\vert x\right\vert ^{2}e^{-\frac{\left\vert
x\right\vert ^{2}}{2}} \\
&=&2e^{-\frac{\left\vert x\right\vert ^{2}}{2}}
\end{eqnarray*}%
while the second gives%
\begin{equation*}
\int_{t<\frac{\left\vert x\right\vert ^{2}}{2}}\left( \left\vert
x\right\vert ^{2}-2t\right) e^{-t}dt=\left\vert x\right\vert ^{2}+2e^{-\frac{%
\left\vert x\right\vert ^{2}}{2}}-2\text{.}
\end{equation*}%
Thus, using the arguments of the previous section to separate saddle points
from local maxima, we have%
\begin{equation*}
D^{sadd}(x)=4xe^{-\frac{3}{2}x^{2}}
\end{equation*}%
and%
\begin{equation*}
D^{\max }(x)=2x\left( x^{2}-2+2e^{-\frac{x^{2}}{2}}\right) e^{-x^{2}}\text{.}
\end{equation*}%
The total density of critical values is then given by%
\begin{equation*}
D^{crit}(x)=D^{sadd}(x)+D^{\max }(x)=2x\left( x^{2}-2+4e^{-\frac{x^{2}}{2}%
}\right) e^{-x^{2}}\text{.}
\end{equation*}

\section{Gaussian non-analytic functions}

\subsection{Gaussian functions in the second Landau level}

In this section we will rephrase the results on zeros and critical points of
a GEF within the realm of the euclidean Landau levels, a popular
mathematical physics model used to describe the quantum dynamics of
charged-like particles, which, under the presence of a constant
perpendicular magnetic field (which will be assumed constant $=1$), organize
themselves in layers with unit increase of energy measured by the
corresponding eigenvalues (the terminology Landau levels is mostly used to
refer to such eigenvalues). Each such Landau level eigenvalue $r$ has an
associated Hilbert space with reproducing kernel, which can be used to
define planar wave functions for repulsive interacting particles distributed
in a particular energy level. This leads to an integer indexed sequence of
determinantal point processes in the plane \cite{SHIRAI}, an important class
of Weyl-Heisenberg ensembles \cite{APRT}, which boil down to the Ginibre
ensemble in the case $r=0$. The increase in energy required for the
allocation of particles in higher Landau levels was for a long time the
mainstream physical explanation for the plateau-like pattern of perfect
integer quantizations observed in the Integer Quantum-Hall Effect \cite%
{Nobel}, when the electron filling factor of the observed sample saturates
its bounded region degeneracy (corresponding to the dimension of the sample
as a vector space conditioned by the Pauli exclusive allocation of an
electron per unit cell). The integer jumps are thus explained by a simple
electron density model, requiring no understanding of the higher order
correlations. Thus, they can be described by the behavior of the single
particle electron density ($1$-point intensity), which has been shown to
have universal circular-law type behavior in all Landau levels, as a special
case of the limit law of finite Weyl-Heisenberg ensembles \cite{abgrro17}, a
geometrical flexible indicator function, initially studied with the aim of
approximating localization regions (or, in the language of anti-Wick
operators, of quantization symbols) by sums of spectrograms with windows
orthogonal in a particular strong sense \cite{AGR}.

Back to the recurring theme of the paper, the critical points of (\ref{GEF})
coincide with the zeros of the random analytic function%
\begin{equation}
F_{1}(z)=\nabla _{z}^{\prime }F(z)\text{, }  \label{F1}
\end{equation}%
with correlation kernel%
\begin{equation}
K_{1}(z,w)=\mathbb{E}(F_{1}(z)\overline{F_{1}(w)})=\nabla _{z}^{\prime }%
\overline{\nabla _{w}^{\prime }}e^{z\overline{w}}=L_{1}(\left\vert
z-w\right\vert ^{2})e^{z\overline{w}}\text{.}  \label{corrK1}
\end{equation}%
The Landau operator acting on the Hilbert space $L^{2}\left( \mathbb{C}%
,e^{-\left\vert z\right\vert ^{2}}\right) $ can be defined as minus the
composition of the symmetric and non-symmetric part of the Chern connection: 
\begin{equation}
L_{z}:=-\partial _{z}\partial _{\overline{z}}+\overline{z}\partial _{%
\overline{z}}=-\nabla _{z}^{\prime }\nabla _{z}^{\prime \prime }\text{.}
\label{2.1.3}
\end{equation}%
The spectrum of $L_{z}$ is given by $\sigma (L_{z})=\{r:r=0,1,2,\ldots \}$.

We now observe that the symmetric part of the Chern connection, $\nabla
_{z}^{\prime }=\partial _{z}-\overline{z}$ is precisely the inter Landau
levels raising operator (maps eigenfunctions of (\ref{2.1.3}) with
eigenvalue $r$ to eigenfunctions with eigenvalue $r+1$), while the
anti-symmetric part $\nabla _{z}^{\prime \prime }=\partial _{\overline{z}}$
is the corresponding lowering operator. Denote by $\mathcal{F}_{2}^{r}(%
\mathbb{C}
)$\ the space of functions which can be written as 
\begin{equation*}
f_{r}(z)=\left( \nabla _{z}^{\prime }\right) ^{r}f(z)\text{,}
\end{equation*}%
for some $f\in \mathcal{F}_{2}^{r}(%
\mathbb{C}
)$. Then $\mathcal{F}_{2}^{r}(%
\mathbb{C}
)$\ is the eigenspace of $L_{z}$ associated with the eigenvalue $r$ and the
corresponding reproducing kernel is%
\begin{equation*}
K_{r}(z,w)=\left( \nabla _{z}^{\prime }\right) ^{r}\overline{\left( \nabla
_{w}^{\prime }\right) ^{r}}e^{z\overline{w}}=r!L_{r}(\left\vert
z-w\right\vert ^{2})e^{z\overline{w}}\text{.}
\end{equation*}%
Setting $r=0$ gives the correlation kernel of the GEF (\ref{GEF}), while $%
r=1 $ leads to the correlation kernel (\ref{corrK1}) of the Chern connection
critical points for the random Gaussian function \ref{F1}. As a result, the
distribution of critical points of a GEF coincides with the distribution of
zeros of a Gaussian random function in the second Landau level ($r=1$).

\subsection{Spectrograms of white noise with the first Hermite window}

The random function $F_{1}(z)$ can also be understood as the STFT of white
noise, since%
\begin{equation*}
Spec_{g}\mathcal{W}(z_{c})=\left\vert V_{h_{1}}\mathcal{W}(\frac{\bar{z}}{%
\pi })\right\vert ^{2}=\left\vert e^{-\left\vert z\right\vert
^{2}/2}\sum_{k=0}^{\infty }a_{k}\frac{(k-\left\vert z\right\vert ^{2})z^{k-1}%
}{\sqrt{k!}}\right\vert ^{2}=\left\vert e^{-\left\vert z\right\vert
^{2}/2}F_{1}(z)\right\vert ^{2}\text{.}
\end{equation*}%
Thus, Theorem 1 implies that $Spec_{h_{1}}\mathcal{W}(z)$ has an average
number of $5/3$ zeros. Heuristically, one can think that the zero-crossings
of the Hermite window $h_{1}$ will be reflected in the spectrogram, leading
to an increase of the number of zeros. Following this reasoning, it is
likely to expect a monotone increasing in the average number of zeros of the
spectrogram with the order (which is fine tuned with the number of real
zero-crossings) of the Hermite window. These heuristics have been confirmed
in \cite[Corollary 1.10]{GWHF}, in work done simultaneously and
independently from this, where the result $r+1/2+\frac{1}{4r+2}$ $\ $is
obtained for the the average number of zeros of $V_{h_{r}}\mathcal{W}(\frac{%
\bar{z}}{\pi })$.

\begin{remark}
By Proposition 1, all local maxima of $\left\vert V_{h_{0}}\mathcal{W}(\bar{z%
})\right\vert $ are zeros of $\left\vert V_{h_{1}}\mathcal{W}(\bar{z}%
)\right\vert $. Thus, the local maxima of $\left\vert V_{h_{0}}\mathcal{W}(%
\bar{z})\right\vert $ and of $\left\vert V_{h_{1}}\mathcal{W}(\bar{z}%
)\right\vert $ are intertwined, confirming the observations in \cite[Figure
10.4]{Flbook} for $r=1$. However, it still remains an open question how to
proof the suggested intertwining property of local maxima for general $r$.
\end{remark}

\subsection{Gaussian Weyl-Heisenberg functions}

In this section we will consider the usual assumption $a_{k}\sim N_{\mathbb{C%
}}(0,1)$ i.i.d.\ The previous section suggests considering the STFT of white
noise with a general normalized window $g\in L^{2}({\mathbb{R}})$:%
\begin{equation*}
V_{g}\mathcal{W}(\frac{\bar{z}}{\pi })=\sum_{k=0}^{\infty }a_{k}V_{g}h_{k}(%
\frac{\bar{z}}{\sqrt{\pi }})\text{.}
\end{equation*}%
This leads us to a family of Gaussian functions with correlation kernel 
\begin{equation*}
{K}_{g}(z,z^{\prime })=V_{g}g(\frac{\bar{z}}{\pi }-\frac{\overline{z^{\prime
}}}{\pi })=\int_{\mathbb{R}}e^{-2i(y^{\prime }-y)t}g(t-x^{\prime }/\pi )%
\overline{g(t-x/\pi )}dt\text{.}
\end{equation*}%
The first two intensities of Gaussian Weyl-Heisenberg functions\ have been
computed in \cite{GWHF}. Setting $g=h_{0}$ we obtain, up to a phase factor,
the correlation kernel of the GEF (corresponding to the Fock kernel, or the
lowest Landau level):%
\begin{equation*}
{K}_{h_{0}}(z,z^{\prime })=e^{i(x^{\prime }y^{\prime }-xy)}e^{-\frac{1}{2}%
(\left\vert z\right\vert ^{2}-\left\vert z^{\prime }\right\vert ^{2})}e^{z%
\overline{z^{\prime }}}\text{.}
\end{equation*}%
The choice $g=h_{r}$ leads to a similar relation with the higher Landau
level kernels: 
\begin{equation*}
{K}_{h_{r}}(z,z^{\prime })=e^{i(x^{\prime }y^{\prime }-xy)}e^{-\frac{1}{2}%
(\left\vert z\right\vert ^{2}-\left\vert z^{\prime }\right\vert ^{2})}e^{z%
\overline{z^{\prime }}}L_{r}(\left\vert z-z^{\prime }\right\vert ^{2})\text{.%
}
\end{equation*}%
\ Determinantal point processes with the kernel ${K}_{g}(z,z^{\prime })$
have been studied in \cite{APRT,abgrro17}, under the name of Weyl-Heisenberg
ensembles.

In general, the non-analiticity of the basis functions creates an
obstruction in several of the arguments used in the fundamental results
about Gaussian Entire Functions, as it should be clear for a reader familiar
with the engrossing presentation of the topic in \cite{GAFbook}. For
instance, the arguments used in Proposition 1 break down if we try to look
at the Chern connection critical points of $V_{g}\mathcal{W}$ when $g$ is
not a Gaussian (the only window leading to entire functions, see \cite%
{AscensiBruna}). Thus, while it is clear how to obtain the zero-intensities,
the analysis leading to the statistics of local extrema may pose some
obstructions when trying to separate local minima from local maxima for more
general windows.

\subsection{Short-time Fourier transforms with white noise windows}

In some signal analysis applications it may be useful to consider a random
window, as done in \cite[Section 8]{PG,Goetz}, where a `white noise window'
has been used in the context of compressed sensing in the finite Gabor
setting. In the continuous infinite setting considered in this paper,
writing $f\mathcal{(}t\mathcal{)}=\sum_{k=0}^{\infty }a_{k}h_{k}(t)$, and
observing that 
\begin{equation*}
V_{g}f(x,\xi )=\int_{{\mathbb{R}}}f(t)\overline{g(t-x)}e^{-2\pi i\xi
t}dt=\int_{{\mathbb{R}}}f(t+x)\overline{g(t)}e^{-2\pi i\xi (t+x)}dt=e^{-2\pi
i\xi x}V_{f}g(x,\xi )
\end{equation*}%
we can write the STFT modulus of $f$ with a random white noise window $%
\mathcal{W}$ as a Gaussian Weyl-Heisenberg function with a window $f$: 
\begin{equation*}
\left\vert V_{\mathcal{W}}f(\frac{\bar{z}}{\pi })\right\vert =\left\vert
V_{f}\mathcal{W}(\frac{\bar{z}}{\pi })\right\vert \text{,}
\end{equation*}%
leading to the set-up of the previous section.

A related problem of interest in signal analysis is the study of the STFT
transform of non-white random noise. Some heuristics regarding the
invariance of the zeros of Gaussian spectrogram noise under `coloring' can
be found in \cite[page 156]{Flbook}.

\subsection{Gaussian bi-entire functions}

A further observation will lead us to the consideration of a related
Gaussian non-analytic function. A complex valued function is said to be
polyanalytic of order $n$ if it satisfies the higher order Cauchy-Riemann
equations,%
\begin{equation}
\partial _{\overline{z}}^{n}f=0\text{.}  \label{polyanalytic}
\end{equation}%
The Fock space of polyanalytic functions, $\mathbf{F}^{n}(%
\mathbb{C}
)$ is the space of polyanalytic functions of order $n$, with finite Fock
norm (\ref{Focknorm}). Vasilevski \cite{VasiFock} obtained the following
decomposition of $\mathbf{F}^{n}(%
\mathbb{C}
)$, in terms of the Landau levels eigenspaces $\mathcal{F}_{2}^{r}(%
\mathbb{C}
)$:%
\begin{equation}
\mathbf{F}^{n}(%
\mathbb{C}
)=\mathcal{F}_{2}^{0}(%
\mathbb{C}
)\oplus ...\oplus \mathcal{F}_{2}^{n-1}(%
\mathbb{C}
)\text{.}  \label{orthogonal}
\end{equation}

Despite some recent research activity related to determinantal point
processes in higher Landau levels and polyanalytic Fock spaces \cite%
{HaiAron,APRT,abgrro17,HendHaimi,SHIRAI,MakotoShirai,Deviations}, we were
unable to find any literature about the corresponding Gaussian random
(non-analytic) functions with exception of  \cite[Theorem 1.8]{GWHF} (though
they occurred implicitly in some form in \cite{NS,DZS}, as we have made
clear in the previous section).

We will finish this note by indicating how to compute the expected number of
zeros of a Gaussian bi-entire function $\mathbf{F(}z\mathbf{)}$ (\ref%
{bientire}). By bi-entire we mean polyanalytic of order $2$ in $\mathbb{C}$%
,\ according to (\ref{polyanalytic}). The decomposition (\ref{orthogonal})
suggests considering the Gaussian random function%
\begin{equation}
\mathbf{F(}z\mathbf{)=}F(z)+F_{1}(z)\mathbf{=}\sum_{k=0}^{\infty }\left(
a_{k}^{(0)}\frac{z^{k}}{\sqrt{k!}}+a_{k}^{(1)}\frac{(k-\left\vert
z\right\vert ^{2})z^{k-1}}{\sqrt{k!}}\right)  \label{bientire}
\end{equation}%
which, according to the models in previous sections, represents the sum of
two STFT's of independent realizations of white noise, 
\begin{equation*}
\mathcal{W}^{(0)}=\sum_{k=0}^{\infty }a_{k}^{(0)}h_{k}(t);\text{ \ \ \ \ \ \
\ \ \ \ \ \ }\mathcal{W}^{(1)}=\sum_{k=0}^{\infty }a_{k}^{(1)}h_{k}(t)\text{,%
}
\end{equation*}%
transformed with orthogonal windows $h_{0}$ and $h_{1}$:%
\begin{equation*}
V_{h_{0}}\mathcal{W}^{(0)}(\frac{\bar{z}}{\sqrt{\pi }})+V_{h_{1}}\mathcal{W}%
^{(1)}(\frac{\bar{z}}{\sqrt{\pi }})=e^{\pi ix\xi }e^{-\left\vert
z\right\vert ^{2}/2}\mathbf{F(}z\mathbf{)}\text{.}
\end{equation*}%
The associated correlation kernel is the sum of the correlation kernel of
the zeros of $F(z)$, with the correlation kernel of the zeros of $F_{1}(z)$: 
\begin{equation*}
K(z,w)=L_{1}^{(1)}(\left\vert z-w\right\vert ^{2})e^{z\overline{w}}=e^{z%
\overline{w}}+L_{1}(\left\vert z-w\right\vert ^{2})e^{z\overline{w}}\text{.}
\end{equation*}%
This is the reproducing kernel of the Fock space of bi-analytic functions $%
\mathbf{F}^{2}(%
\mathbb{C}
)$, given by the sum of the kernels of the first two Landau levels
eigenspaces. The average number of zeros of $F\mathbf{(}z\mathbf{)}$ can be
computed with a procedure similar to the one used in the proof of Theorem 1,
by considering the 3-dimensional Gaussian process 
\begin{equation*}
(F(z),\nabla _{z}^{\prime }F(z),\nabla _{z}^{\prime \prime }F(z))
\end{equation*}%
with correlation kernel $K(z,w)$. One can also use the correspondences
between polyanalytic functions and time-frequency analysis \cite%
{Abr2010,abgrro17} and deduce the result from the theory of Gaussian
Weyl-Heisenberg functions (see \cite[Theorem 1.8]{GWHF}). A related problem
is the study of bi-analytic functions with more general weights. For this
purpose, the asymptotic expansion of the bi-analytic Bergman kernel obtained
in \cite{HaiHen2} may come in handy.

\section{Conclusion}

Based on the correspondence between Gaussian entire functions and
spectrograms of white noise explored in \cite{BFC,BH}, we have provided a
rigorous proof that spectrograms of white noise with Gaussian windows have
an average of $5/3$ critical points, among those $1/3$ are local maxima, and 
$4/3$ saddle points, providing, simultaneously, information about the number
of neighbors of basins of zeros and local extrema, in the model of Nazarov,
Sodin, and Volberg \cite{BO2}. This confirms what is predicted by Flandrin's
double honeycomb mean mode for the average of local maxima and zeros of
white noise spectrograms, where the area of local maxima centered hexagons
is three times larger than the area of zero centered hexagons. We suggest
that such mean model may provide a heuristic reason for the factor 1/3 in
the topological terms of similar problems in complex Riemann surfaces, a
question asked by Douglas, Shiffman and Zelditch in \cite{DZS}. Using
methods of Feng and Zelditch \cite{FZ,FZ1}, we have also provided the
distribution of the ordinates of such points. The information may be useful
in signal recovering threesholding-based methods and in algorithms based on
spectrogram maxima in the style of the publicy released Shazam
Entertainment's recognition algorithm \cite{Wang}. We also made a connection
to Euclidean Landau levels, a model used in condensed matter physics related
to the Quantum-Hall Effect: by noting that the translation invariant part of
the Chern connection coincided with the inter-Landau levels raising
operator, we pointed out that the critical points of the GEF are the zeros
of a Gaussian random function with correlations given by the reproducing
kernel of the first higher Landau level eigenspace, an instance of a space
of bi-entire functions. This suggested the introduction of a bi-entire
Gaussian random function which represents the sum of two independent
realizations of white noise, STFT transformed with the first two Hermite
windows. Further directions with applied potential in physics and signal
analysis involve the setting of compact Riemann surfaces covered in \cite%
{DZS}. In particular, $SU(2)$ polynomials provide finite dimensional models
with treatable formulas \cite{DZS,FZ,FZ1}, which may find applications in
the context of \cite{Sphere}. The $2$-point intensities of critical points
seem to be untreatable without computer algebra support \cite{Baber,GWHF}.

Finally, a few comments which, before risking contempt by more
scientifically orthodox thinkers, should be considered within the umbrella
of speculative assertions. The analogies between the statistics of
supersymmetric vacua, spectrogram extrema and the structure of zeros and
critical points could suggest using the combination of acoustic-related
numerical and thought experiments, of the kind presented in the recent
monograph \cite{Flbook}, as a source of intuition to large scale macroscopic
and microscopic phenomena. The combination of ideas used in this article
intertwine basic principles of time-frequency representations, Gaussian
analytic functions, and condensed matter physics. One may accept the
possibility of a cross-fertilization between such topics, which,
traditionally, tend to be treated in different subdisciplines of
mathematics, physics and acoustics. Moving a step further in these
speculations, one could point to a more abstract picture, since all these
topics may be brought together by the formalism of noncommutative geometry 
\cite{Connes}. For an overview of some of the links between time-frequency
analysis and noncommutative geometry, see for instance the survey of Luef
and Manin \cite{LuefManin}.

\begin{acknowledgement}
The author wishes to thank Patrick Flandrin for motivating discussions and
encouragement in pursuing some of the fascinating questions suggested by his
recent monograph \cite{Flbook}. This paper evolved from a 2020 RG manuscript
which contained several incorrections in its original form, specially in the
proof of Theorem 2. I would like to thank the patience of those who struggled
to read the first versions, and I'm particularly indebted to Guenther
Koliander, Michael Speckbacher and Tomoyuki Shirai for discussions and
valuable feedback during the preparation of this manuscript, to Mikhail
Sodin for clarifying the current (open) status of the questions in \cite[12.3]{BO2}%
, and to Arnaud Poinas for correcting several mistakes. The reviewing
process has considerably improved the paper and I'm grateful for the
editorial efforts and the valuable comments and corrections provided by the
three reviewers.
\end{acknowledgement}


\begin{thebibliography}{99}
\bibitem{Abr2010} L. D. Abreu, \emph{Sampling and interpolation in
Bargmann-Fock spaces of polyanalytic functions}, Appl. Comp. Harm. Anal.,
29, 287-302, (2010).

\bibitem{AF} L. D. Abreu, H. G. Feichtinger, \emph{Function spaces of
polyanalytic functions}, Harmonic and Complex Analysis and Its Applications,
Springer (2014), pp. 1-38

\bibitem{APRT} L.~D. {A}breu, J.~M. {P}ereira, J.~L. {R}omero, and S.~{T}%
orquato. \emph{The Weyl-Heisenberg ensemble: hyperuniformity and higher
Landau levels}. \newblock {\em J. Stat. Mech. Theor. Exp.}, 043103, (2017).

\bibitem{AGR} L.~D. {A}breu, K.~{G}r{\"{o}}chenig, and J.~L. {R}omero. \emph{%
On accumulated spectrograms.} \newblock {\em Trans. Amer. Math.
Soc.}, 368(5):3629 -- 3649, (2016).

\bibitem{abgrro17} L. D. Abreu, K. Gr\"{o}chenig, J. L. Romero, \emph{%
Harmonic analysis in phase space and finite Weyl-Heisenberg ensembles}. J.
Stat. Phys.,\ vol. 174, 5, 1104--1136, (2019).

\bibitem{FilteringWavelet} L. D. Abreu, A. Haimi, G. Koliander, J. L.
Romero, \emph{Filtering with wavelet zeros and Gaussian analytic functions},
Technical report, arXiv:1807.03183v3, (2020).

\bibitem{Adler} R. J. Adler, J. E. Taylor. Random Fields and Geometry.
Springer Monographs in Mathematics. Springer, New York, (2007).

\bibitem{level} J. M. Aza\"{\i}s, M. Wschebor. Level Sets and Extrema of
Random Processes and Fields. John Wiley \& Sons, Inc., Hoboken, NJ, (2009).

\bibitem{AscensiBruna} G. Ascensi, J. Bruna, \emph{Model space results for
the Gabor and Wavelet transforms,} IEEE Trans. Inform. Theory 55, 2250-2259,
(2009).

\bibitem{AAZ} N. Askour, A. Intissar, Z. Mouayn, \emph{Espaces de Bargmann g%
\'{e}n\'{e}ralis\'{e}s et formules explicites pour leurs noyaux
reproduisants.} C. R. Acad. Sci. Paris S\'{e}r. I Math. 325, 707--712,
(1997).

\bibitem{Baber} J. Baber, \emph{Scaled correlations of critical points of
random sections on Riemann surfaces}, J. Stat. Phys. 148, no. 2, 250-279,
(2012).

\bibitem{Pole} P. Balazs, D. Bayer, F. Jaillet, P. Soendergaard, \emph{The
Pole Behaviour of the Phase Derivative of the Short-Time Fourier Transform},
Appl. Comp. Harm. Anal. 30, 610-621, (2016).

\bibitem{Balk} M. B. Balk, \emph{Polyanalytic Functions, }Akad. Verlag,
Berlin (1991).

\bibitem{BFC} R. Bardenet, J. Flamant, P. Chainais, \emph{On the zeros of
the spectrogram of white noise}. Appl. Comp. Harm. Anal., 48, 682-705,
(2020).

\bibitem{BH} R. Bardenet, A. Hardy, \emph{Time-frequency transforms of white
noises and Gaussian analytic functions}, Appl. Comp. Harm. Anal., 50,
73-104, (2021).

\bibitem{BardenetSampta} R. Bardenet, P. Chainais, J. Flamant, A. Hardy, 
\emph{A correspondence between zeros of time-frequency transforms and
Gaussian analytic functions, }In 2019 13th International conference on
Sampling Theory and Applications (SampTA) (pp. 1-4). IEEE (2019).

\bibitem{GAFbook} J. Ben Hough, M. Krishnapur, Y. Peres, B. Vir\'{a}g, \emph{%
Zeros of Gaussian Analytic Functions and Determinantal Point Processes, }%
University Lecture Series Vol. 51, x+154, American Mathematical Society,
Providence, RI (2009).

\bibitem{Hexagonal} L. B\'{e}termin, M. Faulhuber, S. Steinerberger, \emph{A
variational principle for Gaussian lattice sums, }arXiv preprint
arXiv:2110.06008, (2021).

\bibitem{Connes} A Connes, Noncommutative Geometry, Modern Birkh\"{a}user
Classics, 2000.

\bibitem{ReassignmentMaxima} E. Chassande-Mottin, I. Daubechies, F. Auger,
P. Flandrin, \emph{Differential reassignment}, IEEE Signal Proc. Lett., Vol.
4, No. 10, 293--294, (1997).

\bibitem{Dau} I. Daubechies, J. Lu, H.-T. Wu, \emph{Synchrosqueezed wavelet
transforms: An empirical mode decomposition like tool}, Appl. Comp. Harm.
Anal., Vol. 30, 243--261, (2011).

\bibitem{Saddles} M. R. Dennis, J. H. Hannay, \emph{Saddle points in the
chaotic analytic function and Ginibre characteristic polynomial},\ J. Phys.
A: Math. Gen. 36 3379--3383, (2003).

\bibitem{DZS} M. R. Douglas, B. Shiffman, S. Zelditch, \emph{Critical points
and supersymmetric vacua I}, Comm. Math. Phys. 252 (1-3), 325-358, (2004).

\bibitem{EdelmanKostlan} A. Edelman, E. Kostlan, \emph{How many zeros of a
random polynomial are real?}, Bull. Amer. Math. Soc. 32 (1), 1-37, (1995).

\bibitem{Escudero} L. A. Escudero, N. Feldheim, G. Koliander, J. L. Romero, 
\emph{Efficient computation of the zeros of the Bargmann transform under
additive white noise, }Found. Comput. Math. (2022).

\bibitem{FS} M. Faulhuber, S. Steinerberger, \emph{An extremal property of
the hexagonal lattice}. J. Stat. Phys., 177, 285--298 (2019).

\bibitem{Landscapes} H. G. Feichtinger, \emph{Gabor Expansions of Signals:
Computational Aspects and Open Questions, }in\emph{\ }Landscapes of
Time-Frequency Analysis, Springer,\emph{\ }173-206, (2019).

\bibitem{FZ} R. Feng, S. Zelditch, \emph{Critical values of random analytic
functions on complex manifolds}, Indiana Univ. Math. J. 63, 651-686, (2014).

\bibitem{FZ1} R. Feng, S. Zelditch, \emph{Critical values of fixed Morse
index of random analytic functions on Riemann surfaces, }Indiana Univ. Math.
J. 66, no. 1, 23-29, (2017).

\bibitem{Feng} R. Feng, \emph{Correlations between zeros and critical points
of random analytic functions}. Trans. Amer. Math. Soc., 371 (8), 5247-5265,
(2019).

\bibitem{Deviations} M. Fenzl, G. Lambert, \emph{Precise deviations for disk
counting statistics of invariant determinantal processes}, Inter. Math. Res.
Notices, 10, 7420--7494, (2022).

\bibitem{FlandrinReassignment} P. Flandrin, F. Auger, E. Chassande-Mottin, 
\emph{Time-frequency reassignment - From principles to algorithms}, in
Applications in Time-Frequency Signal Processing (A. Papandreou-Suppappola,
Ed.), Chapter 5, pp. 179--203, CRC Press, Boca Raton (FL), 2003.

\bibitem{Silence0} P. Flandrin, \emph{Time--frequency filtering based on
spectrogram zeros}, IEEE Signal Processing Letters 22 (11), 2137-2141,
(2015).

\bibitem{Silence} P. Flandrin, \emph{The sound of silence: recovering
signals from time-frequency zeros}. Signals, Systems and Computers, 50th
Asilomar Conference on. IEEE, 2016.

\bibitem{Fl1} P. Flandrin, \emph{On spectrogram local maxima}. In 2017 IEEE
International Conference on Acoustics, Speech and Signal Processing (ICASSP)
2017 Mar 5 (pp. 3979-3983). IEEE, 2017.

\bibitem{Flbook} P Flandrin, \emph{Explorations in Time-frequency Analysis},
Cambridge University Press, 2018.

\bibitem{Fl2} P. Flandrin, E. Chassande-Mottin, F. Auger. \emph{Uncertainty
and spectrogram geometry}. 2012 Proceedings of the 20th European Signal
Processing Conference (EUSIPCO). IEEE, (2012).

\bibitem{PNAS} T. J. Gardner, M. O. Magnasco, \emph{Sparse time-frequency
representations}, Proc. Nat. Acad. Sci. 103 (16) 6094-6099 (2006).

\bibitem{Goetz} G.E. Pfander, \emph{Gabor frames in finite dimensions},
Finite Frames: Theory and Applications, 193-239, (2013).

\bibitem{GhoshLeb} S. Ghosh, J. L. Lebowitz, \emph{Fluctuations, large
deviations and rigidity in hyperuniform systems: a brief survey.} Indian J.
Pure Appl. Math., 48(4), pp.609-631, (2017).

\bibitem{GN} S. Ghosh, A. Nishry, \emph{Gaussian complex zeros on the hole
event: the emergence of a forbidden region}. Comm. Pure Appl. Math., 72(1),
3-62 (2019).

\bibitem{levelSets} S. Ghosh, M. Lin, D. Sun, \emph{Estimation and inference
of signals via the stochastic geometry of spectrogram level sets}, IEEE
Trans. Signal Proc. 70, 1104-1117, (2022).

\bibitem{Charly} K. Gr\"{o}chenig, Foundations of Time-Frequency Analysis.
Birkh\"{a}user, Boston, (2001).

\bibitem{GWHF} A. Haimi, G. Koliander, J. L. Romero, \emph{Zeros of Gaussian
Weyl-Heisenberg functions and hyperuniformity of charge}, J. Stat. Phys. 187
(3), 1-41 (2022).

\bibitem{HendHaimi} A.~Haimi, H.~Hedenmalm, \emph{The polyanalytic Ginibre
ensembles}, J. Stat. Phys., 153(1):10--47, (2013).

\bibitem{HaiHen2} A. Haimi, H. Hedenmalm, \emph{Asymptotic expansions of
polyanalytic Bergman kernels. }J. Funct. Anal., 267, 4667-4731 (2014).

\bibitem{HaiAron} A. Haimi, A. Wennman, \emph{A Central limit theorem for
fluctuations in Polyanalytic Ginibre ensembles.} Inter. Math. Res. Notices,
(5), 1350-1372, (2019).

\bibitem{MakotoShirai} M. Katori, T. Shirai, \emph{Partial Isometries,
Duality, and Determinantal Point Processes, }Random Matrices: Theory and
Applications, 11(03), 2250025., (2022).

\bibitem{KolSapmTA} G. Koliander, L. D. Abreu, A. Haimi, J. L. Romero, \emph{%
Filtering the Continuous Wavelet Transform Using Hyperbolic Triangulations}.
In 2019 13th International conference on Sampling Theory and Applications
(SampTA), IEEE, (2019).

\bibitem{Nobel} K. von Klitzing, \emph{Nobel lecture: The Quantized Hall
Effect}, Rev. Mod. Phys. 58, 519, (1985).

\bibitem{LuefManin} F. Luef, Y.I. Manin, \emph{Quantum theta functions and
Gabor frames for modulation spaces}. Letters Math. Phys., 88, 131-161,
(2009).

\bibitem{Stockwell} A. Moukadem, J. B. Courbot, B. Colicchio, A. Dieterlen, 
\emph{On the zeros of the Stockwell and Morlet wavelet transforms,}
hal.archives-ouvertes.fr (2021).

\bibitem{BO2} F. Nazarov, M. Sodin, A. Volberg, \emph{Transportation to
random zeroes by the gradient flow}. GAFA, 17 (3), 887-935 (2007).

\bibitem{NS} F. Nazarov, M. Sodin, \emph{Random complex zeroes and random
nodal lines}. In Proceedings of the International Congress of Mathematicians
2010 (ICM 2010) (In 4 Volumes) Vol. I: Plenary Lectures and Ceremonies Vols.
II--IV: Invited Lectures (pp. 1450-1484).

\bibitem{Sphere} B. Pascal, R. Bardenet, \emph{A covariant, discrete
time-frequency representation tailored for zero-based signal detection},
IEEE Trans. Signal Process. 70, 2950 - 2961, (2022).

\bibitem{PG} P. Salanevich, G.E. Pfander, \emph{Geometric properties of
Gabor frames with a random window}. In 2017 International Conference on
Sampling Theory and Applications (SampTA), pp. 183-187, IEEE, (2017).

\bibitem{IoT} P. Roug\'{e}, A. Moukadem, A. Dieterlen, A. Boutet and C.
Frindel, \emph{Anonymizing Motion Sensor Data Through Time-Frequency Domain}%
, 2021 IEEE 31st International Workshop on Machine Learning for Signal
Processing (MLSP), 2021, pp. 1-6

\bibitem{SHIRAI} T.~Shirai, \emph{Ginibre-type point processes and their
asymptotic behavior,} J. Math. Soc. Japan 67 , 2, 763-787, (2015).

\bibitem{AsymptNorm} M. Sodin, B. Tsirelson, \emph{Random complex zeroes, I.
Asymptotic normality}. Israel J. of Math., 144 (1), 125-149, (2004).

\bibitem{SB} T. Strohmer, S. Beaver, \emph{Optimal OFDM design for
time-frequency dispersive channels.} IEEE Trans. Comm. 51.7 , 1111-1122,
(2003).

\bibitem{VasiFock} N. L. Vasilevski, \emph{Poly-Fock spaces.} Differential
operators and related topics, Vol. I (Odessa, 1997), 371--386, Oper. Theory
Adv. Appl., 117, Birkh\"{a}user, Basel, (2000).

\bibitem{Wang} A. Wang, \emph{An industrial strength audio search algorithm}%
, In Ismir, vol. 2003, pp. 7-13, (2003).
\end{thebibliography}
\end{document}